\newtheorem{thm}{Theorem}[section]
 \newtheorem{lem}[thm]{Lemma}
 \newtheorem{rem}[thm]{Remark}
\newtheorem{cor}[thm]{Corollary}
\numberwithin{equation}{section}
\begin{document}

\title{Numerical Scheme for Dynkin Games under Model Uncertainty}
\thanks{}

\author{Yan Dolinksky and Benjamin~Gottesman }

\date{\today}

\subjclass[2010]{91A15, 91G20, 91G60}
 \keywords{Dynkin Games, Model Uncertainty, Skorokhod Embedding}
\maketitle \markboth{}{}
\renewcommand{\theequation}{\arabic{section}.\arabic{equation}}
\pagenumbering{arabic}

\begin{abstract}\noindent
We introduce an efficient numerical scheme for continuous time Dynkin
games under model uncertainty. We use the Skorokhod embedding in order to
construct recombining tree approximations. This technique allows us to
determine convergence rates and to construct numerically optimal stopping strategies.
We apply our method to several examples of game options.
\end{abstract}

\section{Introduction}\label{sec:1}\setcounter{equation}{0}
In this paper, we propose an efficient numerical scheme for the computations of
values of Dynkin games under volatility uncertainty.
We consider a finite maturity, continuous--time robust Dynkin game with respect to a non dominated set of mutually singular
probabilities on the canonical space of continuous paths. In this game, Player 1 who negatively/conservatively
thinks that the nature is also against him, will pay the following payment to Player 2 if the two players choose
stopping strategies $\gamma$ and $\tau$ respectively,
\begin{equation}\label{1.1}
H(\gamma,\tau):=\mathbb{I}_{\gamma<\tau}X_{\gamma}+\mathbb{I}_{\tau\leq\gamma}Y_{\tau}+\int_{0}^{\gamma\wedge\tau}Z_u du.
\end{equation}
We model uncertainty by assuming that the stochastic processes $X,Y,Z$ are path--independent
functions of an underlying asset $S$ which is an exponential martingale with volatility
in a given interval.
Thus, our setup can be viewed as
a Dynkin game variant of Peng's G--expectation (see \cite{P}).

For finite maturity optimal stopping problems/games,
there are no explicit solutions even in the relatively simple framework where the
probabilistic setup is given and the payoffs are path--independent functions of the standard Brownian motion.
Hence, numerical schemes come naturally into the picture.

In \cite{ALP}, the authors presented a recombining trinomial tree based
approximations for what is now known as a $G$--expectation in the sense of Peng (\cite{P}).
However, they did not provide a rigorous proof
for the convergence of their scheme and did not obtain error estimates.
Moreover, a priori, it is not clear whether the tree approximations from \cite{ALP}
can be applied for optimal stopping problems/games.

In this paper, we modify slightly the trinomial trees from \cite{ALP}. For the modified (recombining) trees
we construct a discrete time version of the Dynkin game given by (\ref{1.1}).
The main idea is to apply the Skorokhod embedding technique
in order to prove the existence of an
exact scheme along stopping times with the required properties. More precisely, for any
exponential martingale with volatility
in a given interval we prove that there exists
a sequence
of stopping times such that the ratio of the martingale between two sequel times
belongs to some fixed set of the form
$\left\{\exp\left(-\bar\sigma\sqrt\frac{T}{n}\right),1,\exp\left(\bar\sigma\sqrt\frac{T}{n}\right)\right\}$
and the expectation of the
difference between two sequel times is approximately equal to $\frac{T}{n}$.
Here $\bar\sigma>0$ is the right endpoint of the volatility uncertainty interval, $n$ is the number of time steps
and $T$ is the maturity date. This machinery also allows to go in the reverse direction,
namely for a given distribution on the trinomial tree
we can find a "close" distribution on the canonical space which lies in our set of model uncertainty.

We prove the convergence of the
discrete time approximations to the original control problem. Moreover, we provide
error estimates of order $O(n^{-1/4})$.
The recombining structure
of the trinomial trees allows to compute
the corresponding value with complexity $O(n^2)$ where $n$ is the number of time steps.

The idea of
using the Skorokhod embedding technique
in order to obtain an
exact sequence along stopping times
was also employed in a recent work \cite{BDG} where the authors approximated a one dimensional time--homogeneous
diffusion
by recombining trinomial trees (and obtained the same error of order $O(n^{-1/4})$).
In \cite{BDG}, the authors were able to construct explicitly the stopping times. The construction relies heavily on the
well established theory for exit times of one dimensional time--homogeneous
diffusion processes.
This theory cannot be applied in the present work, since the martingales in the volatility uncertainty
setup are not
necessarily diffusions, or even Markov processes.
Thus, the case of model uncertainty requires additional machinery which we develop in
Section \ref{sec:3}. Moreover, since the martingales may not be Markovian we cannot provide an explicit construction of the stopping times
(as done in \cite{BDG}),
but only prove their existence.

Let us remark that the multidimensional version of the above described result is an open question which requires
a completely different approach. In particular, it is not clear how to derive recombining tree models
which will approximate volatility uncertainty in the multidimensional setup.
We leave this challenging question for future research.

Since its introduction in \cite{Dyn}, Dynkin games have been analyzed in discrete and continuous time
models for decades (see, for instance, \cite{BF,BS,LM,N,O}).
In Mathematical Finance, the theory of Dynkin games can be applied to pricing and hedging game options
and their derivatives, see \cite{D,HZ,K,KK,MC}
and the references in the survey paper \cite{Ki}.
In particular, the
nondominated version of the optional decomposition
theorem developed in \cite{Nu} provides a direct link
(as we will
see rigorously)
between Dynkin games and
pricing game options in the model uncertainty
framework.
In general, the theme of Dynkin games
is a central topic in stochastic control.

In \cite{CK}, the authors connected
Dynkin games to
backward stochastic differential equations (BSDEs)
with two reflecting barriers. This link inspired a very active
research in the field of Dynkin games in a Brownian framework,
see e.g. \cite{BL,BY1,H,HH1,HH,X}.
Motivated by Knightian uncertainty, recently there is also
a growing interest in Dynkin games under model uncertainty, see \cite{BY,D,HH,Y}).
In \cite{BY} the authors analyzed a robust version of the Dynkin game over a set of mutually singular probabilities.
They proved that the game admits a value. Moreover, they established submartingale properties of the value process. These
results will be essential in the present work.

The rest of the paper is organized as follows. In the next section we formulate our main result (Theorem \ref{thm2.1}).
In Section 3, we introduce our main tool which is Skorokhod embedding under model uncertainty.
In Section 4, we complete the proof of the main result. Section 5 is devoted to some
auxiliary estimates which are used in the proof of Theorem \ref{thm2.1}.
In Section 6, we provide numerical analysis
for several examples of game options. Moreover, we argue rigorously
the link between Dynkin games and pricing of game options
in the current setup of model uncertainty.

\section{Preliminaries and Main Result}\label{sec:2}\setcounter{equation}{0}
Let $\Omega:=C(\mathbb R_{+}, \mathbb R)$
be the space of
continuous paths equipped with the topology of locally uniform convergence
and the Borel $\sigma$--field $\mathcal F:=\mathcal B(\Omega$). We denote by $B=B_t$, $t\geq 0$
the canonical process $ B_t(\omega):=\omega_t$ and by $\mathcal F=\mathcal F_t$, $t\geq 0$
the natural filtration generated by
$B$.
For any $t$, $\mathcal T_t$ denotes the set of all stopping times with values in $[0,t]$. We denote
by $\mathcal T$ the set of all stopping times (we allow the stopping times to take the value $\infty$).

For a closed interval $I=[\underline{\sigma},\overline{\sigma}]\subset {\mathbb R}_{+}$ and $s>0$ let
$\mathcal P^{(I)}_s$ be the set of all probability measures $P$ on $\Omega$ under which the canonical
process $B$ is a strictly positive martingale
such that $B_0=s$ $P$--a.s.,
the quadratic variation $\langle  B\rangle $ is absolutely continuous $dt\otimes P$ a.s.  and
$B^{-1}_t\sqrt\frac{d\langle  B\rangle _t}{dt}\in I$ $dt\otimes P$ a.s. Observe that if we define the local martingale
$M_t:=\int_{0}^t \frac{d B_u}{B_u}$, then from It\^{o} Isometry we get
$\sqrt\frac{d\langle M\rangle_t}{dt}=B^{-1}_t\sqrt\frac{d\langle  B\rangle _t}{dt}\in I$. Thus $M$ is a true martingale
and $B_t=\exp(M_t-\langle M\rangle _t/2)$, $t\geq 0$ is the
Dol\'{e}ans--Dade exponential
of $M$. In other words, the set
$\mathcal P^{(I)}_s$ is the set of all probability measures (on the canonical space) such that the canonical process
(which starts in $s$) is a Dol\'{e}ans--Dade exponential of a true martingale with volatility in the interval $I$.

From mathematical finance point of view, the set $\mathcal P^{(I)}_s$
describes the set of all possible distributions of the (discounted) stock price process.
We assume that $I$ is a finite interval, i.e.
$\overline{\sigma}<\infty$. This implies that the set $\mathcal P^{(I)}_s$ is weakly compact and so we can apply the results form \cite{BY}
related to the existence of the optimal strategy of the Dynkin game.
Moreover, the assumption $\overline{\sigma}<\infty$ is essential for
constructing an appropriate sequence of trinomial models .
In addition, we assume that
$\underline{\sigma}>0$, in other words
the model uncertainty setup is "noisy enough". This assumption is technical and will be needed for
obtaining uniform bounds on the expectation of the hitting times
related to the canonical process.

We consider a Dynkin game with maturity date $T<\infty$ and
a payoff given by (\ref{1.1}) with
$X_t=g(t,B_t)$, $Y_t=f(t,B_t)$,
$Z_t=h(t,B_t)$
where $g,f,h:[0,T]\times\mathbb R_{+}\rightarrow \mathbb R$ satisfy
$g\geq f$ and the following Lipschitz condition
\begin{eqnarray}\label{2.function}
&|f(t_1,x_1)-f(t_2,x_2)|+|g(t_1,x_1)-g(t_2,x_2)|+|h(t_1,x_1)-h(t_2,x_2)|\leq \\
&L \left((1+|x_1|)|t_2-t_1|+|x_2-x_1|\right), \ t_1,t_2\in [0,T], \ x_1,x_2\in \mathbb R_{+} \nonumber
\end{eqnarray}
for some constant $L$.

For any $(t,x)\in [0,T]\times\mathbb R_{+}$ define the lower value and the upper value of the game at time $t$ given that the canonical
process satisfies $B_t=x$
\begin{eqnarray*}
&\underline{V}^{(I)}(t,x):=
\sup_{P\in \mathcal{P}^{(I)}_x}\sup_{\tau\in\mathcal T_{T-t}}\inf_{\gamma\in\mathcal T_{T-t}}E_{P}[g(\gamma+t, B_{\gamma}){\mathbb I}_{\gamma<\tau}+\\
&f(\tau+t,B_{\tau})\mathbb{I}_{\tau\leq\gamma}
+\int_{0}^{\gamma\wedge\tau}h(u+t, B_{u})du ]
\end{eqnarray*}
and
\begin{eqnarray*}
&\overline{V}^{(I)}(t,x):=\inf_{\gamma\in\mathcal T_{T-t}}\sup_{ P\in \mathcal{P}^{(I)}_x}\sup_{\tau\in\mathcal T_{T-t}}E_{P}
[g(\gamma+t,B_{\gamma})\mathbb{I}_{\gamma<\tau}+\nonumber\\
&+f(\tau+t,B_{\tau})\mathbb{I}_{\tau\leq\gamma}+\int_{0}^{\gamma\wedge\tau}h(u+t,B_{u})du].
\nonumber
\end{eqnarray*}
From Theorem 4.1 in \cite{BY} it follows that the lower value and the upper value coincide and thus the game has a value
\begin{equation}\label{2.2-}
V^{(I)}(t,x):=\overline{V}^{(I)}(t,x)=\underline{V}^{(I)}(t,x), \ \ \forall(t,x)\in [0,T]\times\mathbb R_{+}.
\end{equation}
Our goal is to calculate numerically the value $V^{(I)}(0,s)$. Moreover, from Theorem 4.1 in \cite{BY} it follows that
the stopping time
$\gamma^{*}:=T\wedge\inf\{t: g(t,B_t)=V^{(I)}(t,B_t)\}$ is an optimal exercise time for Player 1. In Section 6,
we use this formula for numerical calculations
of Player 1's optimal strategy.
\begin{rem}\label{rem2.1}
Our setup is slightly different from the one considered in \cite{BY}. If we use our notations,
then the control problem studied in \cite{BY}
is
\begin{equation}\label{BY1}
\inf_{P\in \mathcal{P}^{(I)}_x}\inf_{\gamma\in\mathcal T_{T}}\sup_{\tau\in\mathcal T_{T}}
E_P\left[\mathbb{I}_{\gamma<\tau}X_{\gamma}+\mathbb{I}_{\tau\leq\gamma}Y_{\tau}+\int_{0}^{\gamma\wedge\tau}Z_u du\right].
\end{equation}
Theorem 4.1 in \cite{BY} shows that the above infimum and supremum can be exchanged.
Furthermore, the authors showed that
 $\tau^{*}:=T\wedge\inf\{t: Y_t=V^{(I)}(t,B_t)\}$ is an optimal stopping time for Player 2
 which can be viewed as the holder of the corresponding game option. The term given
 in (\ref{BY1}) is the lowest arbitrage free price of the corresponding game option.

Clearly, if we replace $X,Y,Z$ by $-Y,-X,-Z$ and replace $\gamma\leftrightarrow\tau$, then
 the above control problem is equivalent to
 \begin{equation}\label{BY2}
 \sup_{P\in \mathcal{P}^{(I)}_x}\sup_{\tau\in\mathcal T_{T}}\inf_{\gamma\in\mathcal T_{T}}
E_P\left[\mathbb{I}_{\gamma\leq \tau}X_{\gamma}+\mathbb{I}_{\tau<\gamma}Y_{\tau}+\int_{0}^{\gamma\wedge\tau}Z_u du\right].
\end{equation}
This is almost the same control problem as we consider, up to the following change. In our setup, on the event
$\{\gamma=\tau\}$ Player 1 pays the low payoff $Y_{\tau}+\int_{0}^{\tau} Z_u du$
while in
(\ref{BY2}) Player 1 pays the high payoff $X_{\gamma}+\int_{0}^{\gamma} Z_u du$. Still,
Theorem 4.1 in \cite{BY} can be extended to this setup as well by following the same proof.
Furthermore, analogously,
the optimal exercise time for Player 1 is given by
$\gamma^{*}:=T\wedge\inf\{t: X_t=V^{(I)}(t,B_t)\}$. Namely, Theorem 4.1 in \cite{BY} provides an optimal exercise time for the player which plays against nature.
In our setup, this is Player 1 who can be seen as the seller of the game option.
The term given by (\ref{2.2-}) is the highest arbitrage free price of the game option.
\end{rem}

Next, we describe the trinomial models and the main result.
Fix $n\in\mathbb N$.
Let
$\xi^{(n)}_1,...,\xi^{(n)}_n$ be random variables with values in the set
$\{-1,0,1\}$ and let $\mathcal F^{(n)}=\{\mathcal F^{(n)}_k\}_{k=0}^n$ be the filtration
generated by $\xi^{(n)}_k$, $k=0,1,...,n$. Denote by $\mathcal T_n$ the set of all stopping times (with respect
to the filtration $\mathcal F^{(n)}$)
with values in
the set $\{0,1,...,n\}$.

For a given
$t\in [0,T]$ and $s\geq 0$
consider the geometric random walk
\[S^{t,s,n}_k:=s\exp\left(\overline{\sigma}\sqrt\frac{T-t}{n}\sum_{i=1}^k \xi^{(n)}_i\right) \ \ k=0,1,...,n.\]
Clearly, the process $\{S^{t,s,n}_k\}_{k=0}^n$ lies on the grid
$s\exp\left(\overline{\sigma}\sqrt\frac{T-t}{n} i\right)$, $i=-n,1-n,...,0,1,...,n$.
Denote by $\mathcal{P}^{I,t,n}$ the set of all probability measures on $\mathcal{F}^{(n)}_n$
such that for any $k=1,...,n$
\begin{eqnarray}\label{2.1}
&P(\xi^{(n)}_k=1|\mathcal F^{(n)}_{k-1})\in \frac{1}{1+\exp(\overline{\sigma}\sqrt{\frac{T-t}{n}})}\left[\exp\left(-4\overline{\sigma}\sqrt{\frac{T-t}{n}}\right)\underline{\sigma}^2/\overline{\sigma}^2,1\right]\\
&P(\xi^{(n)}_k=-1|\mathcal F^{(n)}_{k-1})=\exp(\overline{\sigma}\sqrt{\frac{T-t}{n}})P(\xi^{(n)}_k=1|\mathcal F^{(n)}_{k-1})\label{2.1+}\\
&P(\xi^{(n)}_k=0|\mathcal F^{(n)}_{k-1})=1- P(\xi^{(n)}_k=1|\mathcal F^{(n)}_{k-1})-P(\xi^{(n)}_k=-1|\mathcal F^{(n)}_{k-1}).\label{2.1++}
\end{eqnarray}
Let us explain the intuition behind the definition of the set $\mathcal{P}^{I,t,n}$.
First, we observe that for any $P\in \mathcal{P}^{I,t,n}$
and $k\geq 1$,
$P(\xi^{(n)}_k=0|\mathcal F^{(n)}_{k-1})\geq 0$, i.e. $P$ is indeed a probability measure.
Moreover, from
(\ref{2.1+})--(\ref{2.1++}) it follows that
for any $k\geq 1$
\begin{eqnarray*}
&E_P\left(\frac{S^{t,s,n}_{k}}{S^{t,s,n}_{k-1}}\big|\mathcal F^{(n)}_{k-1}\right)=
\exp\left(\overline{\sigma}\sqrt{\frac{T-t}{n}}\right)P(\xi^{(n)}_k=1|\mathcal F^{(n)}_{k-1})+\\
&\exp\left(-\overline{\sigma}\sqrt{\frac{T-t}{n}}\right)P(\xi^{(n)}_k=-1|\mathcal F^{(n)}_{k-1})+P(\xi^{(n)}_k=0|\mathcal F^{(n)}_{k-1})=1.
\end{eqnarray*}
Hence,
$\{S^{t,s,n}_k\}_{k=0}^n$ is a martingale
with respect to any probability measure $P\in\mathcal {P}^{I,t,n}$. Finally, from (\ref{2.1})--(\ref{2.1+}) we have that
for any $P\in\mathcal {P}^{I,t,n}$ and $k\geq 1$ the conditional expectation of the ratio of the square of the return and the time step satisfy

\begin{eqnarray*}
&\frac{n}{T-t}E_P \left(\left(\ln{S^{t,s,n}_{k}}-\ln{S^{t,s,n}_{k-1}}\right)^2\big|\mathcal F^{(n)}_{k-1}\right)=\\
&\overline{\sigma}^2
\left(P(\xi^{(n)}_k=1|\mathcal F^{(n)}_{k-1})+P(\xi^{(n)}_k=-1|\mathcal F^{(n)}_{k-1})\right)
=\\
&\overline{\sigma}^2\left(1+\exp\left(\overline{\sigma}\sqrt{\frac{T-t}{n}}\right)\right)P(\xi^{(n)}_k=1|\mathcal F^{(n)}_{k-1})
\in \overline{\sigma}^2\left[\exp\left(-4\overline{\sigma}\sqrt{\frac{T-t}{n}}\right)\underline{\sigma}^2/\overline{\sigma}^2,1\right]\\
&=\left[\underline{\sigma}^2,\overline{\sigma}^2\right]\bigcup \underline{\sigma}^2\left[\exp\left(-4\overline{\sigma}\sqrt{\frac{T-t}{n}}\right),1\right].
\end{eqnarray*}
In the above union of intervals, the first interval is exactly the square of the model uncertainty interval $I$,
and the second interval vanishing as $n\rightarrow\infty$.
This is the reason that we expect
that the set $\mathcal{P}^{I,t,n}$ will be a good approximation of the set $\mathcal P^{(I)}_s$ restricted to the interval $[0,T-t]$. We emphasis that
although the interval $\left[\exp\left(-4\overline{\sigma}\sqrt{\frac{T-t}{n}}\right),1\right]$
is vanishing,
it will be essential for the Skorokhod embedding procedure.

Next, we define the corresponding Dynkin game under model uncertainty.
Introduce the lower value and the upper value of the game
\begin{eqnarray*}
&\underline{V}^{I,n}(t,s):=\\
&\sup_{ P\in \mathcal{P}^{I,t,n}}\max_{\eta\in\mathcal T_n}
\min_{\zeta\in\mathcal T_n} E_{P}[g(t+\zeta (T-t)/n,S^{t,s,n}_{\zeta})\mathbb{I}_{\zeta<\eta}\nonumber\\
&+f(t+\eta (T-t)/n,S^{t,s,n}_{\eta})\mathbb{I}_{\eta\leq\zeta}+\frac{T-t}{n}\sum_{k=0}^{\zeta\wedge\eta-1} h(t+k(T-t)/n,S^{t,s,n}_{k})]
\end{eqnarray*}
and
\begin{eqnarray*}
&\overline{V}^{I,n}(t,s):=\min_{\zeta\in\mathcal T_n}\sup_{ P\in \mathcal{P}^{I,t,n}}\max_{\eta\in\mathcal T_n}
E_{P}[g(t+\zeta (T-t)/n,S^{t,s,n}_{\zeta})\mathbb{I}_{\zeta<\eta}\nonumber\\
&+f(t+\eta (T-t)/n,S^{t,s,n}_{\eta})\mathbb{I}_{\eta\leq\zeta}+\frac{T-t}{n}\sum_{k=0}^{\zeta\wedge\eta-1} h(t+k(T-t)/n,S^{t,s,n}_{k})].\nonumber
\end{eqnarray*}
We argue that the above two values coincide. In \cite{KK}, the authors proved a similar statement for the setup where the set of probability
measures is the set of
 equivalent martingale measures. However, the only property that was used in their proof is that there exists a
 a reference measure. Namely, that there exists a measure $Q$ such that all the probability measures in the model uncertainty set
 are absolutely continuous with respect to $Q$.
 In our case the probability measures in $\mathcal{P}^{I,t,n}$ are defined on a finite sample space which supports the random variables
$\xi^{(n)}_1,...,\xi^{(n)}_n$. Thus, there exists a reference measure
$Q$ for the set $\mathcal{P}^{I,t,n}$. For instance,
take $Q$ to be the probability measure for which $\xi^{(n)}_1,...,\xi^{(n)}_n$ are i.i.d. and taking the values
$-1,0,1$ with the same probability $1/3$. Following the
proof of
Theorem 2.2 in \cite{KK} we conclude that the lower value and the upper value coincide and
so the game has a value
$${V}^{I,n}(t,s):=\overline{V}^{I,n}(t,s)=\underline{V}^{I,n}(t,s) \ \ \forall t,s.$$
Moreover, by using standard dynamical programming for Dynkin games (see \cite{O})
we can calculate $V^{I,n}(t,s)$
by the following backward recursion.
Define the functions
${J}^{I,t,s,n}_k:\{-k,1-k,...,0,1,...,k\}\rightarrow\mathbb R, \ \ k=0,1,...,n.$
\begin{equation}\label{recursion1}
{J}^{I,t,s,n}_n(z):=f\left(T,s \exp\left(\overline{\sigma}\sqrt{\frac{T-t}{n}}z\right)\right).
\end{equation}
For $k=0,1,...,n-1$
\begin{eqnarray}\label{recursion2}
&{J}^{I,t,s,n}_k(z):=\max\bigg(f\left(t+k (T-t)/n,s \exp\left(\overline{\sigma}\sqrt{\frac{T-t}{n}}z\right)\right), \\
&\min \bigg(g\left(t+k(T-t)/n,s \exp\left(\overline{\sigma}\sqrt{\frac{T-t}{n}}z\right)\right),\frac{T-t}{n}h\left(t+k(T-t)/n,S^{t,s,n}_{k}\right)+\nonumber\\
&\sup_{p\in \left[\exp\left(-4\overline{\sigma}\sqrt{\frac{T-t}{n}}\right)\underline{\sigma}^2/\overline{\sigma}^2,1\right]}\left((1-p)J^{I,t,s,n}_{k+1}(z)+
\frac{p}{1+\exp\left(\overline{\sigma}\sqrt{\frac{T-t}{n}}\right)}J^{I,t,s,n}_{k+1}(z+1)\right.\nonumber \\
&\left. +
\frac{p\exp\left(\overline{\sigma}\sqrt{\frac{T-t}{n}}\right)}{1+\exp\left(\overline{\sigma}\sqrt{\frac{T-t}{n}}\right)}J^{I,t,s,n}_{k+1}(z-1)\right)\bigg)\bigg)\nonumber\\
&=\max\bigg(f\left(t+k (T-t)/n,s \exp\left(\overline{\sigma}\sqrt{\frac{T-t}{n}}z\right)\right), \nonumber\\
&\min \bigg(g\left(t+k(T-t)/n,s \exp\left(\overline{\sigma}\sqrt{\frac{T-t}{n}}z\right)\right),\frac{T-t}{n}h\left(t+k(T-t)/n,S^{t,s,n}_{k}\right)+\nonumber\\
&\max_{p\in \left\{\exp\left(-4\overline{\sigma}\sqrt{\frac{T-t}{n}}\right)\underline{\sigma}^2/\overline{\sigma}^2,1\right\}}
\left((1-p)J^{I,t,s,n}_{k+1}(z)+\frac{p}{1+\exp\left(\overline{\sigma}\sqrt{\frac{T-t}{n}}\right)}J^{I,t,s,n}_{k+1}(z+1)\right. \nonumber\\
&\left. +
\frac{p\exp\left(\overline{\sigma}\sqrt{\frac{T-t}{n}}\right)}{1+\exp\left(\overline{\sigma}\sqrt{\frac{T-t}{n}}\right)}J^{I,t,s,n}_{k+1}(z-1)\right)\bigg)\bigg),\nonumber
\end{eqnarray}
where the last equality follows from the fact that the supremum (maximum) on an interval of a linear function (with respect to $p$)
is achieved at the end points.
We get that
\begin{equation}\label{recursion3}
V^{I,n}(t,s)=J^{I,t,s,n}_0(0).
\end{equation}
Hence, we see that the computation of $V^{I,n}$ is very simple and its complexity is $O(n^2)$.
Next, we formulate our main result.
\begin{thm}\label{thm2.1}
There exists a constant $C>0$ such that for all $(t,s)\in [0,T]\times\mathbb R_{+}$,
\[|V^{I,n}(t,s)-V^{(I)}(t,s)|\leq C(1+s) n^{-1/4}.\]
\end{thm}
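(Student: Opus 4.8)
The plan is to prove the two one-sided estimates $V^{I,n}(t,s)-V^{(I)}(t,s)\le C(1+s)n^{-1/4}$ and $V^{(I)}(t,s)-V^{I,n}(t,s)\le C(1+s)n^{-1/4}$ separately, in each case transporting strategies and measures between the continuous-time game and the discrete game through the Skorokhod embedding of Section~\ref{sec:3}. Fix $(t,s)$ (the case $t=T$ being trivial) and write $\delta:=(T-t)/n$, $h:=\sqrt{\delta}$. Since both games have a value --- the continuous one by Theorem~4.1 of \cite{BY}, the discrete one by the extension of Theorem~2.2 of \cite{KK} discussed above --- it suffices, in each direction, to fix an $\varepsilon$-optimal choice for the outer (model and maximiser) player and then to transfer the inner (minimiser) player's stopping rule across the embedding, losing at most $C(1+s)n^{-1/4}+\varepsilon$ in expected payoff.

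Consider first $V^{(I)}\le V^{I,n}+C(1+s)n^{-1/4}$, using $V^{(I)}=\underline V^{(I)}=\sup_P\sup_\tau\inf_\gamma$. Fix $P\in\mathcal P^{(I)}_s$ and $\tau\in\mathcal T_{T-t}$ that are $\varepsilon$-optimal for $\sup_\tau\inf_\gamma$, and apply the embedding of Section~\ref{sec:3} to the $P$-martingale $B$: this gives stopping times $0=\theta_0\le\cdots\le\theta_n$ with $B_{\theta_k}/B_{\theta_{k-1}}\in\{e^{-\overline\sigma h},1,e^{\overline\sigma h}\}$ and $E_P[\theta_k-\theta_{k-1}\mid\mathcal F_{\theta_{k-1}}]$ within $o(\delta)$ of $\delta$. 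Then $\tilde S_k:=B_{\theta_k}$ walks on the trinomial grid, and --- this is where the precise shape of the intervals in \eqref{2.1}--\eqref{2.1++}, including the vanishing part, enters --- the law $P^{(n)}$ of $(\xi^{(n)}_k)_k$ that it induces lies in $\mathcal P^{I,t,n}$. Put $\eta:=n\wedge\min\{k:\theta_k\ge\tau\}$ and, for an arbitrary $\zeta\in\mathcal T_n$, $\gamma:=\theta_\zeta\wedge(T-t)\in\mathcal T_{T-t}$. Comparing $E_P[H(\gamma,\tau)]$ with the expected discrete payoff (the bracket defining $\underline V^{I,n}$) under $P^{(n)}$ reduces, through the Lipschitz hypothesis \eqref{2.function}, to bounding three families of terms: the time mismatches $|\zeta\delta-\gamma|$ and $|\eta\delta-\tau|$; the spatial mismatches $|B_\gamma-\tilde S_\zeta|$ and $|B_\tau-\tilde S_\eta|$; and the Riemann-sum error for the $h$-integral. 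Writing $\theta_k-k\delta=M_k+A_k$ with $M$ a martingale whose increments have conditional second moment $O(\delta^2)$ (so $E_P[\sup_k M_k^2]=O(n^{-1})$ by Doob) and $A$ a bias with $|A_n|=O(n^{-1/2})$ from the embedding's timing accuracy, and using that $\sup_k(\theta_k-\theta_{k-1})$ is negligible by the exponential tail of a single embedding increment (here $\underline\sigma>0$ is used), the time mismatches are $O(n^{-1/2})$ in $L^2$; the within-window oscillation of $B$ and the Riemann-sum error are likewise $O((1+s)n^{-1/2})$ up to logarithmic factors. The binding term is the terminal spatial mismatch: on $\{\theta_\zeta>T-t\}$ one has $|B_\gamma-\tilde S_\zeta|\le\sup_{T-t\le u\le\theta_\zeta\vee(T-t)}|B_u-B_{T-t}|$, and the $L^2$-isometry $E_P[|B_\rho-B_\sigma|^2]=E_P[|\langle B\rangle_\rho-\langle B\rangle_\sigma|]$ (with $\rho=T-t$, $\sigma=\theta_\zeta\vee(T-t)$), combined with Cauchy--Schwarz and the bounds $E_P[((\theta_n-(T-t))^+)^2]=O(n^{-1})$, $E_P[\sup_u B_u^4]\le C(1+s)^4$ (Doob and $\overline\sigma<\infty$), gives $E_P[|B_\gamma-\tilde S_\zeta|]=O((1+s)n^{-1/4})$. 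Collecting terms, letting $\varepsilon\downarrow 0$ and then taking $\sup_P$ yields the inequality.

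The reverse bound $V^{I,n}\le V^{(I)}+C(1+s)n^{-1/4}$ is argued symmetrically, now starting from an $\varepsilon$-optimal pair $(P^{(n)},\eta)$ for $\underline V^{I,n}=\sup_P\max_\eta\min_\zeta$ and running the embedding of Section~\ref{sec:3} \emph{in reverse}: one builds a model $\widehat P\in\mathcal P^{(I)}_s$ together with stopping times $0=\theta_0\le\cdots\le\theta_n$ under which $(B_{\theta_k})_k$ has essentially the law of the trinomial walk and $E_{\widehat P}[\theta_k-\theta_{k-1}\mid\mathcal F_{\theta_{k-1}}]\approx\delta$, the admissibility of the interpolating continuous martingale (volatility remaining in $I$) being exactly where $\underline\sigma>0$ is needed. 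Transferring strategies the other way --- $\tau:=\theta_\eta\wedge(T-t)$ in continuous time, and any continuous $\gamma$ pulled back to $\zeta:=n\wedge\min\{k:\theta_k\ge\gamma\}$ --- and estimating the same three families of errors in the same way completes the proof.

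The step I expect to be the main obstacle is the construction and quantitative control of the embedding itself, i.e.\ the content of Section~\ref{sec:3}: since a general $P\in\mathcal P^{(I)}_s$ need not be Markov, or even a diffusion, the $\theta_k$ cannot be written down explicitly as in \cite{BDG}, and one must prove, from the volatility constraints $0<\underline\sigma\le\overline\sigma<\infty$ alone, that (i) the trinomial ratios can be embedded while keeping $E[\theta_k-\theta_{k-1}\mid\mathcal F_{\theta_{k-1}}]$ within $o(\delta)$ of $\delta$ and with uniform exponential control of each increment, (ii) the induced discrete law lands in $\mathcal P^{I,t,n}$, and (iii) the reverse construction yields an admissible $\widehat P$. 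Granting these inputs together with the auxiliary moment and modulus-of-continuity estimates of Section~5, the remainder is the Lipschitz bookkeeping sketched above, whose one genuinely non-routine feature is the conversion of an $O(n^{-1/2})$ clock error into an $O(n^{-1/4})$ payoff error via the martingale $L^2$-isometry and Cauchy--Schwarz.
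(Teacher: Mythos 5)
Your proposal follows essentially the same route as the paper: both one-sided bounds are obtained by fixing $\varepsilon$-optimal choices for the outer player, transporting them across the Skorokhod embedding of Section~\ref{sec:3} (forward via Lemma~\ref{lem3.1}, backward via the map $\Psi_n$ of Lemma~\ref{lem3.2}), transferring the inner player's stopping rule through the hitting times $\theta_k$, and controlling exactly the three error families you list with the moment estimates of Section~\ref{sec:5}. In particular, your identification of the binding term --- converting the $O(n^{-1/2})$ clock error $\max_k|\theta_k-kT/n|$ into an $O((1+s)n^{-1/4})$ spatial error via the It\^{o} isometry, Cauchy--Schwarz and Jensen --- is precisely the mechanism in (\ref{4.7})--(\ref{4.8}) and Lemmas \ref{moments}--\ref{stoppingtimes}.
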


From (\ref{recursion1})--(\ref{recursion2}) and the backward induction it follows that
for a fixed $n$ the function $J^{I,\cdot,\cdot\cdot,n}_0:[0,T]\times\mathbb R_{+}\rightarrow\mathbb R$ is
continuous. This together with (\ref{recursion3}) and Theorem \ref{thm2.1} gives immediately the following
Corollary.\\
\begin{cor}\label{cor2.3}
The function $V^{(I)}(t,s):[0,T]\times\mathbb R_{+}\rightarrow\mathbb R$
is continuous.
\end{cor}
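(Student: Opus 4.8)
The plan is to deduce continuity of $V^{(I)}$ from two facts: for each fixed $n$ the function $(t,s)\mapsto V^{I,n}(t,s)$ is continuous on $[0,T]\times\mathbb R_{+}$, and by Theorem \ref{thm2.1} the functions $V^{I,n}$ converge to $V^{(I)}$ uniformly on every compact subset of $[0,T]\times\mathbb R_{+}$. Since continuity is a local property and a uniform limit of continuous functions on a compact set is continuous there, these two facts immediately give that $V^{(I)}$ is continuous.

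To establish continuity of $V^{I,n}$, I would show by backward induction on $k=n,n-1,\dots,0$ that $(t,s)\mapsto J^{I,t,s,n}_k(z)$ is continuous for each fixed $z\in\{-k,\dots,k\}$. For $k=n$, formula (\ref{recursion1}) expresses $J^{I,t,s,n}_n(z)$ as $f$ composed with the map $(t,s)\mapsto\bigl(T,\,s\exp(\overline{\sigma}\sqrt{(T-t)/n}\,z)\bigr)$, which is continuous; continuity of $f$ (a consequence of the Lipschitz bound (\ref{2.function})) then gives the base case. For the inductive step, assume the claim for $k+1$. In the second (equivalent) form of the recursion (\ref{recursion2}), $J^{I,t,s,n}_k(z)$ is built from: the values $f,g,h$ evaluated at points that depend continuously on $(t,s)$; the three quantities $J^{I,t,s,n}_{k+1}(z-1),J^{I,t,s,n}_{k+1}(z),J^{I,t,s,n}_{k+1}(z+1)$, continuous by the induction hypothesis; the coefficients such as $1/(1+\exp(\overline{\sigma}\sqrt{(T-t)/n}))$, which are continuous in $t$; a maximum over the two-point set $\{\exp(-4\overline{\sigma}\sqrt{(T-t)/n})\underline{\sigma}^2/\overline{\sigma}^2,\,1\}$; and finally an outer $\min$ and $\max$. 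As finite maxima and minima of continuous functions are continuous and composition preserves continuity, $(t,s)\mapsto J^{I,t,s,n}_k(z)$ is continuous. Taking $k=0$ and $z=0$, identity (\ref{recursion3}) shows $V^{I,n}$ is continuous on $[0,T]\times\mathbb R_{+}$.

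For the uniform convergence, fix a compact set $K\subset[0,T]\times\mathbb R_{+}$ and put $s_K:=\max_{(t,s)\in K}s<\infty$. Theorem \ref{thm2.1} gives $\sup_{(t,s)\in K}|V^{I,n}(t,s)-V^{(I)}(t,s)|\le C(1+s_K)n^{-1/4}\to 0$, so $V^{I,n}\to V^{(I)}$ uniformly on $K$; in particular $V^{(I)}|_K$ is continuous, and since $K$ is arbitrary, $V^{(I)}$ is continuous on $[0,T]\times\mathbb R_{+}$. There is no serious obstacle here; the only mild care needed is that the interval $[\exp(-4\overline{\sigma}\sqrt{(T-t)/n})\underline{\sigma}^2/\overline{\sigma}^2,1]$ over which the supremum in (\ref{recursion2}) runs depends on $t$, which is why I would work with the reformulation of the recursion as a maximum over its two endpoints, so that the $t$-dependence enters only through continuous data.
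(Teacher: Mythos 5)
Your proposal is correct and follows exactly the paper's argument: the paper likewise deduces continuity of $V^{I,n}$ for fixed $n$ by backward induction on the recursion (\ref{recursion1})--(\ref{recursion2}) together with (\ref{recursion3}), and then invokes Theorem \ref{thm2.1} to pass the continuity to the limit $V^{(I)}$. Your write-up simply fills in the details (the two-endpoint reformulation of the supremum and the locally uniform nature of the bound $C(1+s)n^{-1/4}$) that the paper leaves implicit.
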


\section{Skorokhod Embedding under Model Uncertainty}\label{sec:3}\setcounter{equation}{0}
In this section we fix an arbitrary $n\in\mathbb N$ .
For any $A\in (0, \overline{\sigma}\sqrt{T/n}]$ and stopping time $\theta\in\mathcal T$ (recall that $\mathcal T$ is the set of all stopping times
with respect to the canonical filtration)
consider the stopping times
\begin{eqnarray}\label{3.0}
&\rho^{(\theta)}_A:=\inf\{t\geq \theta: |\ln B_t-\ln B_{\theta}|=A\} \ \ \mbox{and}\\
&\kappa^{(\theta)}_A:=\infty\mathbb I_{\rho^{(\theta)}_A=\infty}+\sum_{i=1}^2 (-1)^i \mathbb I_{\ln  B_{\rho^{(\theta)}_A}=\ln B_{\theta}+(-1)^i A}\times\nonumber\\
&\inf\left\{t\geq \rho^{(\theta)}_A: \ln B_t=\ln B_{\theta} \ \mbox{or} \
\ln B_t=\ln B_{\theta}+(-1)^i \overline{\sigma}\sqrt{T/n}\right\},\nonumber
\end{eqnarray}
where the infimum over an empty set is equal to $\infty$.
Set
\[z:=z(n)=\exp(-2\overline{\sigma}\sqrt{T/n})\overline{\sigma}^{-2}
\frac{\exp(2\overline{\sigma}\sqrt{T/n})+\exp(-2\overline{\sigma}\sqrt{T/n})-2}{2+\exp(\overline{\sigma}\sqrt{T/n})+\exp(-\overline{\sigma}\sqrt{T/n})}.\]
Observe that $z=T/n+O(n^{-3/2})$.
As usual, we use the convention $O(x)$ to denote
a random variable ($z(n)$ is deterministic) that is uniformly (in time and space) bounded after dividing by $x$.

We start with the following lemma.
\begin{lem}\label{lem3.1}
Let $P\in \mathcal P^{(I)}_s$ and let $\theta\in\mathcal T$ satisfy $E_P[\theta]<\infty$.
There exists a stopping time $\mathcal T\ni\hat\theta\geq\theta$ such that $P$ a.s. we have
$\hat\theta<\infty$ and
$\frac{B_{\hat\theta}}{B_{\theta}}\in \left
\{\exp(-\overline{\sigma}\sqrt{T/n}),0,\exp(\overline{\sigma}\sqrt{T/n})
\right\}$. Furthermore,
$E_{P}(\hat\theta-\theta|\mathcal F_{\theta})=z$ and
\begin{eqnarray}\label{3.1}
&P\left(\frac{B_{\hat\theta}}{ B_{\theta}}=\exp(\overline{\sigma}\sqrt{T/n})|\mathcal F_{\theta}\right)
\in \frac{1}{1+\exp(\overline\sigma T/n)}\left[\exp\left(-4\overline\sigma T/n\right)\underline{\sigma}^2/\overline{\sigma}^2,1\right],\\
& P\left(\frac{ B_{\hat\theta}}{ B_{\theta}}=\exp(-\overline{\sigma}\sqrt{T/n})|\mathcal F_{\theta}\right)=\exp(\overline{\sigma}\sqrt {T/n})
P\left(\frac{ B_{\hat\theta}}{ B_{\theta}}=\exp(\overline{\sigma}\sqrt{T/n})|\mathcal F_{\theta}\right)\label{3.2},\\
&P\left({ B_{\hat\theta}}={ B_{\theta}}|\mathcal F_{\theta}\right)=1-
 P\left(\frac{B_{\hat\theta}}{B_{\theta}}=\exp(\overline{\sigma}\sqrt{T/n})|\mathcal F_{\theta}\right)\label{3.3}\\
&-P\left(\frac{B_{\hat\theta}}{B_{\theta}}=-\exp(\overline{\sigma}\sqrt{T/n})|\mathcal F_{\theta}\right).\nonumber
\end{eqnarray}
Notice the resemblance to the formulas (\ref{2.1})--(\ref{2.1++}). In particular, (\ref{3.1}) gives the technical reason for the
definition given by (\ref{2.1}).
\end{lem}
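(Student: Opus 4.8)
The plan is to build $\hat\theta$ as one of the stopping times $\kappa^{(\theta)}_A$, with the level $A$ chosen (measurably in $\omega$ through $\mathcal F_\theta$) so that the conditional expected duration equals $z$ exactly; the remaining properties then fall out of optional sampling applied to the martingales $B$, $M^2-\langle M\rangle$ and $B^2-\langle B\rangle$. I would first dispose of finiteness and of $(\ref{3.2})$--$(\ref{3.3})$. Since $d\langle M\rangle_t/dt\ge\underline{\sigma}^2>0$, one has $\langle M\rangle_t\ge\underline{\sigma}^2 t\to\infty$, so by Dambis--Dubins--Schwarz $\ln B_t=M_t-\tfrac12\langle M\rangle_t=W_{\langle M\rangle_t}-\tfrac12\langle M\rangle_t\to-\infty$ $P$-a.s.; hence, for every $A\in(0,\overline{\sigma}\sqrt{T/n}]$, $\rho^{(\theta)}_A$ and then $\kappa^{(\theta)}_A$ are $P$-a.s. finite, $B_{\kappa^{(\theta)}_A}/B_\theta\in\{\exp(-\overline{\sigma}\sqrt{T/n}),1,\exp(\overline{\sigma}\sqrt{T/n})\}$, and $\ln B$ is confined to $[\ln B_\theta-\overline{\sigma}\sqrt{T/n},\ln B_\theta+\overline{\sigma}\sqrt{T/n}]$ on $[\theta,\kappa^{(\theta)}_A]$. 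As $B$ is therefore conditionally bounded there, optional sampling along $\kappa^{(\theta)}_A\wedge t$ (letting $t\to\infty$) applied to $B^2$ gives $E_P[\langle B\rangle_{\kappa^{(\theta)}_A}-\langle B\rangle_\theta\mid\mathcal F_\theta]\le B_\theta^2(\exp(2\overline{\sigma}\sqrt{T/n})-1)<\infty$, and since $\underline{\sigma}>0$ this yields $E_P[\kappa^{(\theta)}_A-\theta\mid\mathcal F_\theta]<\infty$ $P$-a.s.; applied to $B$ it gives $E_P[B_{\kappa^{(\theta)}_A}\mid\mathcal F_\theta]=B_\theta$, which, with $p_+,p_-,p_0$ denoting the three conditional probabilities, is exactly $p_-=\exp(\overline{\sigma}\sqrt{T/n})\,p_+$ and $p_0=1-p_+-p_-$, i.e. $(\ref{3.2})$--$(\ref{3.3})$, for any $A$.

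Next I would fix $A$. Let $\rho:=\rho^{(\theta)}_{\overline{\sigma}\sqrt{T/n}}=\kappa^{(\theta)}_{\overline{\sigma}\sqrt{T/n}}$ and $E^{*}:=E_P[\rho-\theta\mid\mathcal F_\theta]$. On the two exit events $M_\rho-M_\theta=\pm\overline{\sigma}\sqrt{T/n}+\tfrac12(\langle M\rangle_\rho-\langle M\rangle_\theta)$, so in either case $(M_\rho-M_\theta)^2\ge\overline{\sigma}^2(T/n)-\overline{\sigma}\sqrt{T/n}\,(\langle M\rangle_\rho-\langle M\rangle_\theta)$; combining this with the identity $E_P[(M_\rho-M_\theta)^2\mid\mathcal F_\theta]=E_P[\langle M\rangle_\rho-\langle M\rangle_\theta\mid\mathcal F_\theta]$ (optional sampling for $M^2-\langle M\rangle$) gives $E_P[\langle M\rangle_\rho-\langle M\rangle_\theta\mid\mathcal F_\theta]\ge\overline{\sigma}^2(T/n)/(1+\overline{\sigma}\sqrt{T/n})$, and since $\langle M\rangle_\rho-\langle M\rangle_\theta\le\overline{\sigma}^2(\rho-\theta)$ we obtain $\overline{\sigma}^2 E^{*}\ge (T/n)/(1+\overline{\sigma}\sqrt{T/n})$. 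On the other hand $\overline{\sigma}^2 z=(\exp(\overline{\sigma}\sqrt{T/n})-1)^2\exp(-3\overline{\sigma}\sqrt{T/n})\le (T/n)\exp(-\overline{\sigma}\sqrt{T/n})\le (T/n)/(1+\overline{\sigma}\sqrt{T/n})$ by the elementary bounds $1-e^{-x}\le x\le e^{x}-1$ at $x=\overline{\sigma}\sqrt{T/n}$; hence $z\le E^{*}$. Because $A\mapsto E_P[\kappa^{(\theta)}_A-\theta\mid\mathcal F_\theta]$ is continuous on $(0,\overline{\sigma}\sqrt{T/n}]$, tends to $0$ as $A\downarrow0$, and equals $E^{*}\ge z>0$ at the right endpoint, an intermediate-value and measurable-selection argument (run on a regular conditional kernel $P(\cdot\mid\mathcal F_\theta)$) produces an $\mathcal F_\theta$-measurable $A^{*}\in(0,\overline{\sigma}\sqrt{T/n}]$ with $E_P[\kappa^{(\theta)}_{A^{*}}-\theta\mid\mathcal F_\theta]=z$. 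I then set $\hat\theta:=\kappa^{(\theta)}_{A^{*}}$, a stopping time since $A^{*}$ is $\mathcal F_\theta$-measurable; this $\hat\theta$ already satisfies all the required properties except $(\ref{3.1})$.

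For $(\ref{3.1})$, write $a:=\exp(\overline{\sigma}\sqrt{T/n})$. The upper bound $p_+\le 1/(1+a)$ is immediate from $(1+a)p_+=p_++p_-=1-p_0\le1$. For the lower bound, optional sampling for $B^2-\langle B\rangle$ gives $E_P[B_{\hat\theta}^2-B_\theta^2\mid\mathcal F_\theta]=E_P[\langle B\rangle_{\hat\theta}-\langle B\rangle_\theta\mid\mathcal F_\theta]$; substituting $p_-=ap_+$ and $p_0=1-(1+a)p_+$, the left side equals $B_\theta^2 p_+\,(a-1)^2(a+1)/a$, while $\langle B\rangle_{\hat\theta}-\langle B\rangle_\theta=\int_\theta^{\hat\theta}B_u^2\,\tfrac{d\langle M\rangle_u}{du}\,du\ge\underline{\sigma}^2 B_\theta^2 a^{-2}(\hat\theta-\theta)$ on $[\theta,\hat\theta]$, so the right side is $\ge\underline{\sigma}^2 B_\theta^2 a^{-2}z$. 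Plugging in $z=\overline{\sigma}^{-2}(a-1)^2 a^{-3}$ and cancelling $(a-1)^2$, this becomes $p_+\ge(\underline{\sigma}^2/\overline{\sigma}^2)\,a^{-4}/(1+a)$, which is exactly the left endpoint of the interval in $(\ref{3.1})$; so the proof is finished. The quantitative content is thus just the one-line inequalities $1-e^{-x}\le x\le e^{x}-1$, the choice of $z$ being reverse-engineered so that the $B^2$-estimate lands precisely on the interval endpoint. The step I expect to be the real obstacle is the continuity of $A\mapsto E_P[\kappa^{(\theta)}_A-\theta\mid\mathcal F_\theta]$ and the accompanying measurable selection of $A^{*}$: this rests on the $P$-a.s. stability, for each fixed $A$, of the exit side of $\ln B$ from the symmetric tube of width $A$ about $\ln B_\theta$ (so that $A\mapsto\kappa^{(\theta)}_A$ is $P$-a.s. continuous at $A$ and dominated convergence applies with dominating variable $\rho-\theta$), and it is precisely here that one uses that $B$ is a general non-degenerate continuous martingale rather than a diffusion, which is also why $\hat\theta$ is only shown to exist.
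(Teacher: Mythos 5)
Your proof is correct and follows the same architecture as the paper's: take $\hat\theta=\kappa^{(\theta)}_{A}$ for an $\mathcal F_\theta$--measurable level $A$ tuned so that $E_P(\hat\theta-\theta|\mathcal F_\theta)=z$, obtain (\ref{3.2})--(\ref{3.3}) from optional sampling of $B$ on the three-point support, and obtain (\ref{3.1}) from the second-moment identity combined with the two-sided volatility bounds --- your computation $p_+(a-1)^2(a+1)/a=E_P\left(B^2_{\hat\theta}/B^2_\theta-1|\mathcal F_\theta\right)\in z[\underline{\sigma}^2a^{-2},\overline{\sigma}^2a^2]$ with $a=\exp(\overline{\sigma}\sqrt{T/n})$ is precisely the paper's display (\ref{3.1001}) and the lines following it. Two sub-steps are executed differently. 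For the key inequality $E_P(\rho-\theta|\mathcal F_\theta)\geq z$, the paper computes $E_P((B_\rho-B_\theta)^2|\mathcal F_\theta)$ exactly from the exit probabilities and applies the It\^o isometry to $B$, so the bound equals $z$ on the nose ($z$ is defined as exactly this quantity); you instead work on the log scale with $M^2-\langle M\rangle$ and the elementary bounds $1-e^{-x}\leq x\leq e^x-1$, which is slightly lossier but equally valid. Be aware, though, that your two displayed inequalities $\overline{\sigma}^2E^{*}\geq (T/n)/(1+\overline{\sigma}\sqrt{T/n})$ and $\overline{\sigma}^2z\leq(T/n)e^{-\overline{\sigma}\sqrt{T/n}}$ each carry a spurious factor $\overline{\sigma}^2$ on the left (the second is false for $\overline{\sigma}>1$); since the same factor appears on both sides of the comparison, the conclusion $z\leq E^{*}$ survives once it is removed, but as written the chain does not parse. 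For the selection of the level, you use an intermediate-value plus measurable-selection argument on a regular conditional kernel, whereas the paper sets $\mathcal Z:=\sup\{q\in\mathbb Q\cap(0,\overline{\sigma}\sqrt{T/n}]:E_P(\kappa^{(\theta)}_q|\mathcal F_\theta)\leq z\}$, which is automatically $\mathcal F_\theta$--measurable and uses only the left-continuity supplied by monotone convergence; both versions ultimately rest on the a.s.\ regularity of $A\mapsto\kappa^{(\theta)}_A$, which you correctly single out as the delicate point (the paper is in fact terser about it than you are).
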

\begin{proof}
Denote
$\rho:=\rho^{(\theta)}_{\overline{\sigma}\sqrt{T/n}}$.
From the fact that $B$ is a $P$--martingale with volatility bonded away from zero, it follows that
$E_P[\rho]<\infty$. Thus, $\frac{B_{\rho}}{B_{\theta}}=\exp(\pm \overline{\sigma}\sqrt{T/n})$, $P$--a.s.,
and from the martingale property we have
\begin{equation*}
P\left( B_{\rho}= B_{\theta}\exp(\pm\overline{\sigma}\sqrt{T/n})|\mathcal F_{\theta}\right)=
\frac{1}{1+\exp(\pm \overline{\sigma}\sqrt{T/n})}.
\end{equation*}
Hence,
\begin{equation}\label{3.1000}
E_{P}\left((B_{\rho}-B_{\theta})^2|\mathcal F_{\theta}\right)=
\frac{\exp(2\overline{\sigma}\sqrt{T/n})+
\exp(-2\overline{\sigma}\sqrt{T/n})-2}{2+\exp(\overline{\sigma}\sqrt{T/n})+
\exp(-\overline{\sigma}\sqrt{T/n})}B^2_{\theta}.
\end{equation}
From
 the It\^{o} isometry and the fact that under $P$, the process $B$ is an
 exponential martingale with volatility less or equal then $\overline{\sigma}$ we
 obtain
 $$E_{P}\left((B_{\rho}-B_{\theta})^2|\mathcal F_{\theta}\right)\leq E_P\left[\int_{\theta}^{\rho} B^2_t\overline{\sigma}^2 dt|\mathcal F_{\theta}\right]\leq
\overline{\sigma}^2\exp( 2\overline{\sigma}\sqrt{T/n})  B^2_{\theta}E_{P}(\rho-\theta|\mathcal F_{\theta}),$$
 where the last inequality follows from the fact that $B_t\leq \exp(\overline{\sigma}\sqrt{T/n}) B_{\theta}$ for $t\in [\theta,\rho]$.
This together with (\ref{3.1000}) yields
\begin{equation}\label{3.5}
E_{P}(\kappa^{(\theta)}_{\overline{\sigma}\sqrt{T/n}}-\theta|\mathcal F_{\theta})=E_{P}(\rho-\theta|\mathcal F_{\theta})\geq z.
\end{equation}
Next, we notice that for $A_2>A_1$ we have $\kappa^{(\theta)}_{A_2}>\kappa^{(\theta)}_{A_1}$, $P$ a.s.
Moreover, if $A_n\uparrow A$ then
$\kappa^{(\theta)}_{A_n}\uparrow \kappa^{(\theta)}_A$ $P$ a.s.
Hence, from the Monotone Convergence Theorem
\begin{equation}\label{3.6}
 A_n\uparrow A \  \Rightarrow  \
E_{P}(\kappa^{(\theta)}_A|\mathcal F_{\theta})=\lim_{n\rightarrow\infty}E_{P}(\kappa^{(\theta)}_{A_n}|\mathcal F_{\theta}).
\end{equation}
Let $\mathbb Q$ be the set of rational numbers. Define the random variable
\[\mathcal Z:=\sup\{q\in\mathbb Q\cap (0, \overline{\sigma}\sqrt{T/n}]:E_{P}(\kappa^{(\theta)}_q|\mathcal F_{\theta})\leq z \}.\]
Clearly, $\mathcal Z$ is $\mathcal F_{\theta}$--measurable.
Moreover, from the monotonicity property of $\kappa^{(\theta)}_A$ and (\ref{3.5})--(\ref{3.6}), we obtain for the stopping time
$\hat\theta:=\kappa^{(\theta)}_{\mathcal Z}$
that
$E_{P}(\hat\theta-\theta|\mathcal F_{\theta})=z.$

Finally, from the fact that
$\frac{B_{\hat\theta}}{B_{\theta}}\in \left
\{\exp(-\overline{\sigma}\sqrt{T/n}),0,\exp(\overline{\sigma}\sqrt{T/n})
\right\}$ and
$E_{P}\left(\frac{B_{\hat\theta}}{B_{\theta}}|\mathcal F_{\theta}\right)=1$
we conclude that (\ref{3.2})--(\ref{3.3}) hold true. Thus,
\begin{eqnarray}\label{3.1001}
&E_{P}\left(B^2_{\hat\theta}/B^2_{\theta}-1|\mathcal F_{\theta}\right)=
\left(\exp(2\overline{\sigma}\sqrt{T/n})+\exp(-\overline{\sigma}\sqrt{T/n})\right)\times\\
&P\left(\frac{B_{\hat\theta}}{ B_{\theta}}=\exp(\overline{\sigma}\sqrt{T/n})|\mathcal F_{\theta}\right)-
\left(1+\exp(\overline{\sigma}\sqrt{T/n})\right)P\left(\frac{B_{\hat\theta}}{B_{\theta}}=\exp(\overline{\sigma}\sqrt{T/n})|\mathcal F_{\theta}\right).\nonumber
\end{eqnarray}
By applying the It\^{o} isometry,
we obtain
$$ E_P\left[\int_{\theta}^{\hat\theta} B^2_t\underline{\sigma}^2 dt|\mathcal F_{\theta}\right]\leq E_{P}\left(B^2_{\hat\theta}-B^2_{\theta}|\mathcal F_{\theta}\right)\leq E_P\left[\int_{\theta}^{\hat\theta} B^2_t\overline{\sigma}^2 dt|\mathcal F_{\theta}\right].$$
This together with the equality
$E_{P}(\hat\theta-\theta|\mathcal F_{\theta})=z$
and the inequality $\exp(-\overline{\sigma}\sqrt{T/n}) B_{\theta}\leq B_t\leq \exp(\overline{\sigma}\sqrt{T/n})B_{\theta}$ gives
 $$E_{P}\left(B^2_{\hat\theta}/B^2_{\theta}-1|\mathcal F_{\theta}\right)\in z[\underline{\sigma}^{2}\exp(-2\overline{\sigma}\sqrt{T/n}),\overline{\sigma}^{2}\exp(2\overline{\sigma}\sqrt{T/n})].$$
 Hence, from (\ref{3.1001}) and the definition of $z$ we
conclude (\ref{3.1}) and completes the proof.
\end{proof}

Next, for a given initial stock price $s>0$, we
construct an embedding of probability measures
$\Psi_n:\mathcal P^{I,0,n}\rightarrow \mathcal P^{(I)}_s$.
Choose
$P\in  \mathcal P^{I,0,n}$. There exists functions
\[\phi_i:\{-1,0,1\}^i\rightarrow \frac{1}{1+\exp(\overline{\sigma}\sqrt{T/n})}\left[\exp\left(-4\overline{\sigma}\sqrt{T/n}\right)\underline{\sigma}^2/\overline{\sigma}^2,1\right], \ \ i=0,1,...,n-1\]
such that (\ref{2.1}) holds true with
\begin{equation*}
 P(\xi^{(n)}_k=1|\mathcal F^{(n)}_{k-1})=
\phi_{k-1}(\xi^{(n)}_1,...,\xi^{(n)}_{k-1}), \ \  k=1,...,n.
\end{equation*}
Recall the canonical space $\Omega=C(\mathbb R_{+},\mathbb R)$.
On this sample space we define a sequence of random variables
$A_0,...,A_{n},\theta_0,...,\theta_{n}$ by the following recursion.
Let $\theta_0:=0$ and
$A_0\in (0,\overline{\sigma}\sqrt{T/n}]$ be the unique solution of the equation
\begin{equation*}
\frac{\exp(x)-1}{(1+\exp(x))(\exp(\overline{\sigma}\sqrt{T/n})-1)}=\phi_0.
\end{equation*}
Recall the definition given by (\ref{3.0}). For $k=1,...,n$ set $\theta_k:=\kappa^{(\theta_{k-1})}_{A_{k-1}}$,
and on the event $\{\theta_k<\infty\}$ define $A_{k}\in (0,\overline{\sigma}\sqrt{T/n}]$ to be the unique solution of the equation
\begin{eqnarray*}
&\frac{\exp(x)-1}{(1+\exp(x))(\exp(\overline{\sigma}\sqrt{T/n})-1)}= \\
&\phi_{k}\left(\overline{\sigma}^{-1}(T/n)^{-1/2}(\ln  B_{\theta_1}- \ln B_{\theta_0}),..., \overline{\sigma}^{-1}(T/n)^{-1/2}(\ln B_{\theta_{k}}-\ln B_{\theta_{k-1}})
\right).
\end{eqnarray*}
On the event $\{\theta_k=\infty\}$ we set $A_k=0$.
Define the random variables $\sigma_0,...,\sigma_{n-1}$ by
\begin{eqnarray}\label{3.50}
&\sigma_{k}:=\mathbb{I}_{\theta_k<\infty}\max\bigg(\underline{\sigma},\overline\sigma \sqrt{1+\exp(\overline{\sigma}\sqrt{T/n})}\times\\
&\left(\phi_{k}\left(\overline{\sigma}^{-1}(T/n)^{-1/2}(\ln B_{\theta_1}- \ln B_{\theta_0}),..., \overline{\sigma}^{-1}(T/n)^{-1/2}(\ln B_{\theta_{k}}-\ln  B_{\theta_{k-1}})\right)\right)^{1/2}
\bigg).\nonumber
\end{eqnarray}
Observe that on the event $\{\theta_k<\infty\}$ we have $\sigma_k\in I$.
Thus, the fact that the volatility interval $I$ is bounded away from zero
implies that there exists a unique probability measure $\hat P:=\Psi_n(\Pi)\in \mathcal P^{(I)}_s$
such that $E_{\hat P}[\theta_n]<\infty$, and that for any $k<n$,
$B^{-1}_t\sqrt\frac{d\langle B\rangle _t}{dt}\equiv \sigma_k$
on the random interval $[\theta_k,\theta_{k+1})$ $\hat P$ a.s.
\begin{lem}\label{lem3.2}
The joint distribution of
$\ln B_{\theta_1}- \ln B_{\theta_0},..., \ln B_{\theta_{n}}-\ln  B_{\theta_{n-1}}$ under $\hat P$ is equal to the
joint distribution of $\overline{\sigma}\sqrt {T/n}\xi^{(n)}_1,...,\overline{\sigma}\sqrt {T/n}\xi^{(n)}_{n}$
under $P$. Moreover, for any $k<n$,
$\hat P(B_{\theta_{k+1}}|\mathcal F_{\theta_k})=\hat P(B_{\theta_{k+1}}|B_{\theta_1},...,B_{\theta_k})$ and
$E_{\hat P}(\theta_{k+1}-\theta_{k}|\mathcal F_{\theta_k})=T/n+O(n^{-3/2}).$
\end{lem}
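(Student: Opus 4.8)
The plan is to read everything off from the facts that, under $\hat P$, the canonical process $B$ is a genuine martingale whose volatility never drops below $\underline{\sigma}>0$ and $E_{\hat P}[\theta_n]<\infty$. The latter forces $\hat P(\theta_k<\infty)=1$ for all $k\le n$, and the former forces the intermediate hitting times $\rho^{(\theta_k)}_{A_k}$ to be $\hat P$--a.s.\ finite as well (as in the opening line of the proof of Lemma \ref{lem3.1}). Write $G_j:=\overline{\sigma}^{-1}(T/n)^{-1/2}(\ln B_{\theta_j}-\ln B_{\theta_{j-1}})\in\{-1,0,1\}$ and $a:=\overline{\sigma}\sqrt{T/n}$. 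Since $\theta_k$ is a stopping time and $B_{\theta_1},\dots,B_{\theta_k}$ are $\mathcal F_{\theta_k}$--measurable, $A_k$ is $\mathcal F_{\theta_k}$--measurable and solves its defining equation with right--hand side $\phi_k(G_1,\dots,G_k)$. I would first apply optional stopping to the bounded martingale $B$ on $[\theta_k,\rho^{(\theta_k)}_{A_k}]$, where $B$ ends at $e^{A_k}B_{\theta_k}$ or $e^{-A_k}B_{\theta_k}$, to get $\hat P(\ln B_{\rho^{(\theta_k)}_{A_k}}=\ln B_{\theta_k}+A_k\mid\mathcal F_{\theta_k})=(1+e^{A_k})^{-1}$; then, conditioning on $\mathcal F_{\rho^{(\theta_k)}_{A_k}}$ and running $B$ (again bounded) from $e^{A_k}B_{\theta_k}$ until it hits $B_{\theta_k}$ or $e^{a}B_{\theta_k}$, optional stopping gives conditional probability $(e^{A_k}-1)/(e^{a}-1)$ for the upper target. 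Multiplying, $\hat P(G_{k+1}=1\mid\mathcal F_{\theta_k})=\phi_k(G_1,\dots,G_k)$; the symmetric lower branch gives $\hat P(G_{k+1}=-1\mid\mathcal F_{\theta_k})=e^{a}\phi_k(G_1,\dots,G_k)$, and $\hat P(G_{k+1}=0\mid\mathcal F_{\theta_k})$ is the remaining mass. This argument uses only that the relevant hitting times are finite and that $B$ is a martingale on the corresponding bounded intervals, so the three conditional probabilities do not depend on the volatility $\sigma_k$.

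Comparing with (\ref{2.1})--(\ref{2.1++}) at $t=0$, the kernel just obtained is exactly the conditional law of $\xi^{(n)}_{k+1}$ given $\mathcal F^{(n)}_k$ under $P$, and as the \emph{same} deterministic function of the past. Hence a straightforward induction on $k$ (trivial for $k=0$) shows that $(G_1,\dots,G_k)$ under $\hat P$ has the same law as $(\xi^{(n)}_1,\dots,\xi^{(n)}_k)$ under $P$; taking $k=n$ and multiplying by $a$ gives the first assertion. Moreover this kernel depends on $\mathcal F_{\theta_k}$ only through $(G_1,\dots,G_k)$, equivalently through $(B_{\theta_1},\dots,B_{\theta_k})$, and $B_{\theta_{k+1}}=B_{\theta_k}e^{aG_{k+1}}$; this gives the second assertion.

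For the time estimate, write $p:=\phi_k(G_1,\dots,G_k)$. From the three--point conditional law of $B_{\theta_{k+1}}/B_{\theta_k}$ one computes directly
\[
E_{\hat P}\bigl(B^2_{\theta_{k+1}}-B^2_{\theta_k}\mid\mathcal F_{\theta_k}\bigr)=B^2_{\theta_k}\,p\,(e^{a}-1)(e^{a}-e^{-a}).
\]
On the other hand, since under $\hat P$ the volatility of $B$ equals $\sigma_k$ on $[\theta_k,\theta_{k+1})$ while the rule $\kappa^{(\theta_k)}_{A_k}$ keeps $\ln B$ within distance $a$ of $\ln B_{\theta_k}$, the It\^o isometry gives
\[
\sigma_k^2 e^{-2a}B^2_{\theta_k}\,E_{\hat P}(\theta_{k+1}-\theta_k\mid\mathcal F_{\theta_k})\le E_{\hat P}\bigl(B^2_{\theta_{k+1}}-B^2_{\theta_k}\mid\mathcal F_{\theta_k}\bigr)\le \sigma_k^2 e^{2a}B^2_{\theta_k}\,E_{\hat P}(\theta_{k+1}-\theta_k\mid\mathcal F_{\theta_k}).
\]
Combining the two displays, $E_{\hat P}(\theta_{k+1}-\theta_k\mid\mathcal F_{\theta_k})\in[e^{-2a},e^{2a}]\cdot p(e^{a}-1)(e^{a}-e^{-a})/\sigma_k^2$. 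Using the algebraic identity $(e^{a}-1)(e^{a}-e^{-a})=\overline{\sigma}^2(1+e^{a})e^{2a}z$, the formula $\sigma_k^2=\max\!\bigl(\underline{\sigma}^2,\overline{\sigma}^2(1+e^{a})p\bigr)$ and the admissible range $p\in(1+e^{a})^{-1}[e^{-4a}\underline{\sigma}^2/\overline{\sigma}^2,\,1]$, one checks in each of the two regimes for $\sigma_k$ that $p(e^{a}-1)(e^{a}-e^{-a})/\sigma_k^2\in[e^{-2a}z,e^{2a}z]$, hence $E_{\hat P}(\theta_{k+1}-\theta_k\mid\mathcal F_{\theta_k})\in[e^{-4a}z,e^{4a}z]$. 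Since $e^{\pm4a}=1+O(n^{-1/2})$ and $z=T/n+O(n^{-3/2})$, this is $T/n+O(n^{-3/2})$, the third assertion.

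The main obstacle is this last step: the It\^o--isometry comparison only pins the conditional expectation down to a multiplicative window of width $1+O(n^{-1/2})$, so one must be careful that every such window multiplies $z$ itself (not $T/n$ with a coarser remainder), and one must treat separately the regime $\sigma_k=\underline{\sigma}$ — precisely the regime in which the lower bound relies on the otherwise vanishing interval $[e^{-4a}\underline{\sigma}^2/\overline{\sigma}^2,1]$ built into the definition of $\mathcal{P}^{I,0,n}$. A minor but genuine point in the first step is to justify that the hitting probabilities are volatility independent; this is why optional stopping is applied to $B$ directly (its own scale function) rather than to $\ln B$.
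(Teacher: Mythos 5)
Your proposal is correct and follows essentially the same route as the paper: the two--stage optional--stopping computation giving $\hat P(G_{k+1}=1\mid\mathcal F_{\theta_k})=\frac{1}{1+e^{A_k}}\cdot\frac{e^{A_k}-1}{e^{a}-1}=\phi_k$, the induction identifying the law of $(G_1,\dots,G_n)$ with that of $(\xi^{(n)}_1,\dots,\xi^{(n)}_n)$, and the comparison of the conditional second moment with the It\^o--isometry bounds to pin down $E_{\hat P}(\theta_{k+1}-\theta_k\mid\mathcal F_{\theta_k})$. Your explicit two--regime analysis of $\sigma_k=\max(\underline{\sigma},\dots)$ and the identity $(e^{a}-1)(e^{a}-e^{-a})=\overline{\sigma}^2(1+e^{a})e^{2a}z$ merely spell out what the paper compresses into the statement $\phi_k=\sigma_k^2\bigl(\tfrac{1}{\overline{\sigma}^2(1+e^{a})}+O(\sqrt{T/n})\bigr)$.
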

\begin{proof}
For any $k$ we have $\frac{B_{\theta_{k+1}}}{B_{\theta_k}}\in \left
\{\exp(-\overline{\sigma}\sqrt{T/n}),0,\exp(\overline{\sigma}\sqrt{T/n})
\right\}$ and $E_{\hat P}\left(\frac{B_{\theta_{k+1}}}{B_{\theta_k}}|\mathcal F_{\theta_k}\right)=1$.
Fix $k<n$. We argue
that
\begin{eqnarray}\label{3.yan}
&\hat P\left(\frac{B_{\theta_{k+1}}}{B_{\theta_k}}=\exp(\overline{\sigma}\sqrt{T/n})|\mathcal F_{\theta_k}\right)=\\
&\phi_{k}\left(\overline{\sigma}^{-1}(T/n)^{-1/2}(\ln B_{\theta_1}- \ln B_{\theta_0}),..., \overline{\sigma}^{-1}(T/n)^{-1/2}(\ln B_{\theta_{k}}-\ln  B_{\theta_{k-1}})\right).\nonumber
\end{eqnarray}
Indeed, from (\ref{3.0}), the definition of $A_k$ and the
martingale property of $B$ we get
\begin{eqnarray*}
&\hat P\left(\frac{B_{\theta_{k+1}}}{B_{\theta_k}}=\exp(\overline{\sigma}\sqrt{T/n})|\mathcal F_{\theta_k}\right)=\\
&\hat P\left(B_{\rho^{(A_k)}_{\theta_k}}=\exp(A_k) B_{\theta_k}|\mathcal F_{\theta_k}\right)\times\\
&\hat P\left(B_{\theta_{k+1}}=\exp(\overline{\sigma}\sqrt{T/n})B_{\theta_{k}}|B_{\rho^{(A_k)}_{\theta_k}}=\exp(A_k) B_{\theta_k},\mathcal F_{\theta_k}\right)=\\
&\frac{1}{1+\exp(A_k)}\frac{\exp(A_k)-1}{\exp(\overline\sigma\sqrt{T/n})-1}=\\
&\phi_{k}\left(\overline{\sigma}^{-1}(T/n)^{-1/2}(\ln B_{\theta_1}- \ln B_{\theta_0}),..., \overline{\sigma}^{-1}(T/n)^{-1/2}(\ln B_{\theta_{k}}-\ln  B_{\theta_{k-1}})\right)
\end{eqnarray*}
as required. In particular
$\hat P(B_{\theta_{k+1}}|\mathcal F_{\theta_k})=\hat P(B_{\theta_{k+1}}|B_{\theta_1},...,B_{\theta_k})$. Furthermore, from the definition of
$\phi_k$, $k=0,1,...,n-1$ we conclude that the joint distribution of
$\ln B_{\theta_1}- \ln B_{\theta_0},..., \ln B_{\theta_{n}}-\ln  B_{\theta_{n-1}}$ is equal to the
joint distribution of $\overline{\sigma}\sqrt {T/n}\xi^{(n)}_1,...,\overline{\sigma}\sqrt {T/n}\xi^{(n)}_{n}$.

Finally, we estimate $E_{\hat P}(\theta_{k+1}-\theta_{k}|\mathcal F_{\theta_k})$.
From (\ref{3.50}) and the inequality
\[\phi_k\geq \frac{1}{1+\exp(\overline{\sigma}\sqrt{T/n})}\exp\left(-4\overline{\sigma}\sqrt{T/n}\right)\underline{\sigma}^2/\overline{\sigma}^2\] we get
\begin{eqnarray*}
&\phi_{k}\left(\overline{\sigma}^{-1}(T/n)^{-1/2}(\ln B_{\theta_1}- \ln B_{\theta_0}),..., \overline{\sigma}^{-1}(T/n)^{-1/2}(\ln B_{\theta_{k}}-\ln B_{\theta_{k-1}})
\right)=\\
&\sigma^2_k\left(\frac{1}{\overline\sigma^2(1+\exp(\overline\sigma\sqrt{T/n}))}+O(\sqrt{T/n})\right).
\end{eqnarray*}
This together with (\ref{3.2})--(\ref{3.3}) and (\ref{3.yan}) yields
\begin{eqnarray}\label{3.yann}
& E_{\hat P}\left( B^2_{\theta_{k+1}}/B^2_{\theta_k}-1|\mathcal F_{\theta_k}\right)\\
&=\left(\exp(2\overline{\sigma}\sqrt{T/n})+\exp(-\overline{\sigma}\sqrt{T/n})-1-\exp(\overline{\sigma}\sqrt{T/n})\right)\times\nonumber\\
&\sigma^2_k\left(\frac{1}{\overline\sigma^2(1+\exp(\overline\sigma\sqrt{T/n}))}+O(\sqrt{T/n})\right)=\sigma^2_k\left(\frac{T}{n}+O(n^{-3/2})\right).\nonumber
\end{eqnarray}
From the
It\^{o} isometry and the fact that (under the probability measure $\hat P$)
the volatility of the canonical process $B$ is constant (equal to $\sigma_k$) on the interval
$[\theta_k,\theta_{k+1})$ we obtain
$$E_{\hat P}\left( B^2_{\theta_{k+1}}/B^2_{\theta_k}-1|\mathcal F_{\theta_k}\right)\in
\sigma^2_k
E_{\hat P}(\theta_{k+1}-\theta_{k}|\mathcal F_{\theta_k})[\exp(-2\overline\sigma\sqrt{T/n}),\exp(2\overline\sigma\sqrt{T/n})].$$
Thus, from (\ref{3.yann}) it follows that $E_{\hat P}(\theta_{k+1}-\theta_{k}|\mathcal F_{\theta_k})=(1+O(1/\sqrt n))\frac{T}{n}$,
and the proof is completed.
\end{proof}

\section{Proof Theorem \ref{thm2.1}}\label{sec:4}\setcounter{equation}{0}
For simplicity, we assume that the starting time is $t=0$. For a general $t\in [0,T]$ the proof is done in the same way.
Denote by $s>0$ the initial stock price.
\subsection{Proof of the inequality $V^{(I)}(0,s)\leq V^{I,n}(0,s)+C(1+s) n^{-1/4}$}
\begin{proof}
Fix $n\in\mathbb N$ and choose $\epsilon>0$. There exists a probability
measure $P^*\in \mathcal P^{(I)}_s$ and a stopping time $\tau^*\in\mathcal T_T$ such that
\begin{equation}\label{4.1}
V^{(I)}(0,s)\leq \epsilon+ \inf_{\gamma\in\mathcal T_{T}}E_{P^*}
\left[g(\gamma, B_{\gamma}){\mathbb I}_{\gamma<\tau^*}+
f(\tau^*,B_{\tau^*})\mathbb{I}_{\tau^*\leq\gamma}+\int_{0}^{\gamma\wedge\tau^*}h(u, B_{u})du\right].
\end{equation}
From Lemma \ref{lem3.1} it follows that we can choose a sequence of stopping times
$0=\theta_0<\theta_1<\theta_2<...<\theta_n$ such that $P^*$ a.s., for any $i=1,...,n$
\[
\frac{B_{\theta_i}}{B_{\theta_{i-1}}}\in \left
\{\exp(-\overline{\sigma}\sqrt{T/n}),0,\exp(\overline{\sigma}\sqrt{T/n})
\right\},\]
\begin{eqnarray*}
&P^*\left(\frac{B_{\theta_i}}{B_{\theta_{i-1}}}=\exp(\overline{\sigma}\sqrt {T/n})|\mathcal F_{\theta_{i-1}}\right)
\in \frac{1}{1+\exp(\overline{\sigma}\sqrt{T/n})}\left[\exp\left(-4\overline{\sigma}\sqrt{T/n}\right)\underline{\sigma}^2/\overline{\sigma}^2,1\right],\\
& P^*\left(\frac{B_{\theta_i}}{B_{\theta_{i-1}}}=\exp(-\overline{\sigma}\sqrt{T/n})|\mathcal F_{\theta_{i-1}}\right)=\exp(\overline{\sigma}\sqrt{T/n})
P^*\left(\frac{ B_{\theta_i}}{ B_{\theta_{i-1}}}=\exp(\overline{\sigma}\sqrt {T/n})|\mathcal F_{\theta_{i-1}}\right)\label{4.2},\\
& P^*\left({B_{\theta_i}}={ B_{\theta_{i-1}}}|\mathcal F_{\theta_{i-1}}\right)=1-
P^*\left(\frac{B_{\theta_i}}{B_{\theta_{i-1}}}=\exp(\overline{\sigma}\sqrt {T/n})|\mathcal F_{\theta_{i-1}}\right)\label{4.3}\\
&- P^*\left(\frac{B_{\theta_i}}{B_{\theta_{i-1}}}=-\exp(\overline{\sigma}\sqrt {T/n})|\mathcal F_{\theta_{i-1}}\right),\nonumber
\end{eqnarray*}
and
$E_{P^*}(\theta_i-\theta_{i-1}|\mathcal F_{\theta_{i-1}})=z$
where $z=z(n)$ is given before Lemma \ref{lem3.1}.
In words, we apply the Skorokhod embedding technique given by Lemma \ref{lem3.1} in order to
construct a sequence of stopping times
such that the ratio of $B$ between two sequel times
belongs to
$\left\{\exp\left(-\bar\sigma\sqrt\frac{T}{n}\right),1,\exp\left(\bar\sigma\sqrt\frac{T}{n}\right)\right\}$.
Moreover, the expectation of the
difference between two sequel times is $\frac{T}{n}+O(n^{-3/2})$. The last fact
will be used via the Auxiliary Lemmas \ref{lem5.3}--\ref{stoppingtimes}.

Now, comes the main idea of the proof. Recall the geometric random walk
$\{S^{0,s,n}_k\}_{k=0}^n$ and the
trinomial models given by the set of probability measures
$\mathcal P^{I,0,n}$. From (\ref{2.1})--(\ref{2.1++}) and the above properties of the probability measure $P^*$
 it follows that there exists a probability measure $\tilde P\in \mathcal P^{I,0,n}$ such that
 the distribution
 of $\{B_{\theta_i}\}_{i=0}^n$ under $P^{*}$ equals to the distribution
 of $\{S^{0,s,n}_k\}_{k=0}^n$ under $\tilde P$.
Moreover, using similar arguments as in Lemma \ref{lem3.2} we obtain
that for any $k<n$,
$P^{*}(B_{\theta_{k+1}}|\mathcal F_{\theta_k})=P^{*}(B_{\theta_{k+1}}|B_{\theta_1},...,B_{\theta_k}).$
The above two properties give
\begin{eqnarray*}
&\max_{\eta\in\mathcal T_n}
\min_{\zeta\in\mathcal T_n} E_{\tilde P}[g(\zeta T/n,S^{0,s,n}_{\zeta})\mathbb{I}_{\zeta<\eta}\nonumber\\
&+f(\eta T/n,S^{0,s,n}_{\eta})\mathbb{I}_{\eta\leq\zeta}+\frac{T}{n}\sum_{k=0}^{\zeta\wedge\eta-1} h(kT/n,S^{0,s,n}_{k})]=\\
&\sup_{\eta\in\mathcal S_n}\inf_{\zeta\in\mathcal S_n}E_{P^{*}}[g(\zeta T/n,B_{\theta_{\zeta}})\mathbb{I}_{\zeta<\eta}\\
&+f(\eta T/n,B_{\theta_{\eta}})\mathbb{I}_{\eta\leq\zeta}+\frac{T}{n}\sum_{k=0}^{\zeta\wedge\eta-1} h(k T/n,B_{\theta_k})].
\end{eqnarray*}
Hence, we conclude
\begin{eqnarray}\label{4.4}
&V^{I,n}(0,s)\geq\sup_{\eta\in\mathcal S_n}
\inf_{\zeta\in\mathcal S_n}E_{P^{*}}[g(\zeta T/n,B_{\theta_{\zeta}})\mathbb{I}_{\zeta<\eta}\\
&+f(\eta T/n,B_{\theta_{\eta}})\mathbb{I}_{\eta\leq\zeta}+\frac{T}{n}\sum_{k=0}^{\zeta\wedge\eta-1} h(k T/n,B_{\theta_k})].\nonumber
\end{eqnarray}
The final step is technical. We are using (\ref{4.1})--(\ref{4.4}) in order to
bound from above the difference $V^{(I)}(0,s)-V^{I,n}(0,s)$.

Introduce the stopping time $\eta^*:=n\wedge\min\{k:\theta_k\geq\tau^*\}\in\mathcal S_n$.
In view of (\ref{4.4}) there exists a stopping time
$\zeta^*\in\mathcal S_n$ such that
\begin{eqnarray}\label{4.5}
&V^{I,n}(0,s)\geq\\
&E_{P^*}\left[g(\zeta^* T/n,B_{\theta_{\zeta^*}})\mathbb{I}_{\zeta^*<\eta^*}
+f(\eta^* T/n,B_{\theta_{\eta^*}})\mathbb{I}_{\eta^*\leq\zeta^*}+\frac{T}{n}\sum_{k=0}^{\zeta^*\wedge\eta^*-1} h(k T/n,B_{\theta_k})\right]-\epsilon.\nonumber
\end{eqnarray}
Define the stopping time
$\gamma^*:=(T\wedge\theta^{(n)}_{\zeta^*})\mathbb{I}_{\zeta^*<n}+T\mathbb{I}_{\zeta^*=n}\in\mathcal T_T$.
From (\ref{4.1}) and (\ref{4.5}) we obtain that
\begin{eqnarray}\label{4.6}
&V^{(I)}(0,s)\leq V^{(I,n)}(0,s)+2\epsilon+\\
& E_{P^*}[g(\gamma^*,B_{\gamma^*})\mathbb{I}_{\gamma^*<\tau^*}-g(\zeta^* T/n,B_{\theta_{\zeta^*}})\mathbb{I}_{\zeta^*<\eta^*}]\nonumber\\
&+E_{P^*}[f(\tau^*,B_{\tau^*})\mathbb{I}_{\tau^*\leq\gamma^*}
-f(\eta^* T/n,B_{\theta_{\eta^*}})\mathbb{I}_{\eta^*\leq\zeta^*}]\nonumber\\
&+\nonumber E_{P^*}[\int_{0}^{\gamma^*\wedge\tau^*}h(u, B_{u})du-\frac{T}{n}\sum_{k=0}^{\zeta^*\wedge\eta^*-1} h(k T/n,B_{\theta_k})].\nonumber
\end{eqnarray}
From technical reasons we extend the function $h$ to the domain $\mathbb R^2$ by
$h(t,x):=h(t\wedge T,x)$. Clearly, the extended $h$ is satisfying the Lipschitz condition given
by (\ref{2.function}) on the domain $\mathbb R^2$.
We observe that if $\gamma^*<\tau^*$, then $\zeta^*<\eta^*$. This together with (\ref{2.function}),
which in particular implies that $h(t,x)=O(1)(1+|x|)(1+t)$,
and (\ref{4.6})
gives
\begin{eqnarray}\label{4.7}
&V^{(I)}(0,s)\leq V^{(I,n)}(0,s)+2\epsilon+O(1)E_{P^*}|B_{\gamma^*\wedge\tau^*}-B_{\theta_{\zeta^*\wedge\eta^*}}|+\\
&O(1)E_{P^*}\left[(1+\sup_{0\leq t\leq\theta_n\vee T} B_t)(1+\theta_n\vee T)\times\right.\nonumber\\
&\left.(|\gamma^*\wedge\tau^*-\zeta^*\wedge\eta^*\frac{T}{n}|+|\gamma^*\wedge\tau^*-\theta_{\zeta^*\wedge\eta^*}|)\right]\nonumber\\
&+E_{P^*}\left(\max_{1\leq k\leq n}\left|\int_{0}^{\theta_k} h(t,B_t)dt-\sum_{i=0}^{k-1} h(i T/n, B_{\theta_i})\right|\right).\nonumber
\end{eqnarray}
From the definition of the stopping times $\eta^*$ and $\gamma^*$ it follows that
$|\gamma^*\wedge\tau^*-\zeta^*\wedge\eta^* \frac{T}{n}|\leq \max_{1\leq k\leq n}|\theta_k-k T/n|+T/n$
and
\[|\gamma^*\wedge\tau^*-\theta_{\zeta^*\wedge\eta^*}|\leq |T-\theta_n|+\max_{1\leq k\leq n}\theta_k-\theta_{k-1}\leq 3\max_{1\leq k\leq n}|\theta_k-k T/n|+T/n.\]
Hence, from the Cauchy--Schwarz inequality, the Jensen inequality,
Lemma \ref{moments} and Lemma \ref{lem5.3} it follows that
\begin{eqnarray}\label{4.8}
&E_{P^*}\left[(1+\sup_{0\leq t\leq\theta_n\vee T} B_t)(1+\theta_n\vee T)\times\right.\\
&\left.(|\gamma^*\wedge\tau^*-\zeta^*\wedge\eta^*\frac{T}{n}|+|\gamma^*\wedge\tau^*-\theta_{\zeta^*\wedge\eta^*}|)\right]\leq\nonumber\\
&\left(E_{P^*}((1+\sup_{0\leq t\leq\theta_n\vee T} B_t)^4)\right)^{1/4}
\left(E_{P^*}((1+\theta_n\vee T)^4)\right)^{1/4}\times\nonumber\\
&\left(E_{P^*}((4 \max_{1\leq k\leq n}|\theta_k-k T/n|+ 2 T/n)^2)\right)^{1/2}=O((1+s) n^{-1/2}).\nonumber
\end{eqnarray}
Similarly, from the It\^{o} isometry we obtain
\[E_{P^*}((B_{\gamma^*\wedge\tau^*}-B_{\theta_{\zeta^*\wedge\eta^*}})^2)\leq E_{P^*}[\overline\sigma^2\max_{0\leq t\leq\theta_n\vee T}B^2_t
|\gamma^*\wedge\tau^*-\theta_{\zeta^*\wedge\eta^*}|]=O(s
^2 n^{-1/2}).\]
This together with the Jensen inequality, (\ref{4.7})--(\ref{4.8}) and Lemma \ref{stoppingtimes}
gives that
$$V^{(I)}(0,s)\leq V^{I,n}(0,s)+2\epsilon+O((1+s) n^{-1/4})$$
and by letting $\epsilon\downarrow 0$ we complete the proof.
\end{proof}

\subsection{Proof of the inequality $V^{I,n}(0,s)\leq V^{(I)}(0,s)+C(1+s) n^{-1/4}$}
\begin{proof}
The proof is very similar to the proof of the first inequality. Fix $n\in\mathbb N$ and choose $\epsilon>0$.
We abuse notations and denote by $P^{*}$ a probability measure in $\mathcal{P}^{I,0,n}$ which satisfy
\begin{eqnarray}\label{4.9}
&V^{I,n}(0,s)\leq \epsilon+ \max_{\eta\in\mathcal T_n}
\min_{\zeta\in\mathcal T_n} E_{P^*}[g(\zeta T/n,S^{t,s,n}_{\zeta})\mathbb{I}_{\zeta<\eta}\\
&+f(\eta T/n,S^{t,s,n}_{\eta})\mathbb{I}_{\eta\leq\zeta}+\frac{T}{n}\sum_{k=0}^{\zeta\wedge\eta-1} h(kT/n,S^{t,s,n}_{k})].\nonumber
\end{eqnarray}
Recall the definition of $\hat P^{*}:=\Psi_n(P^{*})$
and the stopping times $0=\theta_0<\theta_1<...<\theta_n$ given before Lemma \ref{lem3.2}.
Denote by $\mathcal S_n$ the set of all stopping times with respect to the filtration
$\{\mathcal F_{\theta_i}\}_{i=0}^n$ with values in the set $\{0,1,...,n\}$.
By applying Lemma \ref{lem3.2} and the same arguments as before (\ref{4.4}) it follows that
\begin{eqnarray}\label{4.10}
&\max_{\eta\in\mathcal T_n}
\min_{\zeta\in\mathcal T_n} E_{P^{*}}[g(\zeta T/n,S^{t,s,n}_{\zeta})\mathbb{I}_{\zeta<\eta}\\
&+f(\eta T/n,S^{t,s,n}_{\eta})\mathbb{I}_{\eta\leq\zeta}+\frac{T}{n}\sum_{k=0}^{\zeta\wedge\eta-1} h(kT/n,S^{t,s,n}_{k})]=\nonumber\\
&\sup_{\eta\in\mathcal S_n}
\inf_{\zeta\in\mathcal S_n} E_{\hat P^{*}}[g(\zeta T/n,B_{\theta_{\zeta}})\mathbb{I}_{\zeta<\eta}\nonumber\\
&+f(\eta T/n,B_{\theta_{\eta}})\mathbb{I}_{\eta\leq\zeta}+\frac{T}{n}\sum_{k=0}^{\zeta\wedge\eta-1} h(kT/n,B_{\theta_k})].\nonumber
\end{eqnarray}
The equality (\ref{4.10}) is the cornerstone of the proof. The remaining part is technical, and we use
(\ref{4.9})--(\ref{4.10}) to estimate from above the difference
$V^{I,n}(0,s)-V^{(I)}(0,s)$. Indeed,
from (\ref{4.9})--(\ref{4.10}) it follows that there exists $\eta^{*}\in\mathcal S_n$ (again we abuse notations)
such that
\begin{eqnarray}\label{4.11}
&V^{I,n}(0,s)\leq 2\epsilon+ \inf_{\zeta\in\mathcal S_n}
E_{\hat P^*}[g(\zeta T/n,B_{\theta_{\zeta}})\mathbb{I}_{\zeta<\eta^*}\nonumber\\
&+f(\eta^* T/n,B_{\theta_{\eta^*}})\mathbb{I}_{\eta^*\leq\zeta}+\frac{T}{n}\sum_{k=0}^{\zeta\wedge\eta^*-1} h(kT/n,B_{\theta_k})].\nonumber
\end{eqnarray}
Define the stopping time $\tau^*:=\theta_{\eta^*}\wedge T\in\mathcal T_T$. Clearly, there exists a stopping time $\gamma^{*}\in\mathcal T_T$ such that
\begin{equation*}\label{4.12}
V^{(I)}(0,s)\geq E_{\hat P^*}\left[g(\gamma^*,B_{\gamma^*}){\mathbb I}_{\gamma^*<\tau^*}+
f(\tau^*,B_{\tau^*})\mathbb{I}_{\tau^*\leq\gamma^*}+\int_{0}^{\gamma^*\wedge\tau^*}h(u, B_{u})du\right]-\epsilon.
\end{equation*}
Next, introduce the stopping time
$\zeta^*:=n\wedge\min\{k:\theta_k\geq\gamma^*\}\mathbb{I}_{\gamma^*<T}+n\mathbb{I}_{\gamma^*=T}\in\mathcal S_n$. We observe that
if $\zeta^*<\eta^*$ then $\gamma^*<\tau^*$. Thus, similarly to (\ref{4.7}) we get
\begin{eqnarray*}
&V^{I,n}(0,s)\leq V^{(I)}(0,s)+3\epsilon+O(1)E_{\hat P^*}|B_{\gamma^*\wedge\tau^*}-B_{\theta_{\zeta^*\wedge\eta^*}}|+\\
& O(1) E_{\hat P^*}\left[(1+\sup_{0\leq t\leq\theta_n\vee T} B_t)(1+\theta_n\vee T)(|\gamma^*\wedge\tau^*-\zeta^*\wedge\eta^*\frac{T}{n}|+|\gamma^*\wedge\tau^*-\theta_{\zeta^*\wedge\eta^*}|)\right]\nonumber\\
&+E_{\hat P^*}\left(\max_{1\leq k\leq n}\left|\int_{0}^{\theta_k} h(t,B_t)dt-\sum_{i=0}^{k-1} h(i T/n, B_{\theta_i})\right|\right).\nonumber
\end{eqnarray*}
Finally, by using the same estimates as in Section 4.1, we obtain
that
$$V^{I,n}(0,s)\leq V^{(I)}(0,s)+3\epsilon+O((1+s) n^{-1/4})$$
and by letting $\epsilon\downarrow 0$ we complete the proof.
\end{proof}
\begin{rem}
Let us notice that in the present setup of model uncertainty we get the same error estimates as in the case
with no uncertainty which was studied in \cite{BDG}. The main reason is that Lemma 5.3 which is essential for the proof
cannot be improved even for the most simple case where the canonical process
is a geometric Brownian motion with constant volatility. Namely, the Skorokhod embedding technique cannot provide error estimates of order better than
$O(n^{-1/4})$ even for the approximations of American or game options in the Black--Scholes model.
For details, see \cite{Ki1}.
Fortunately, same estimates can be obtained for the volatility
uncertainty setup.
\end{rem}

\section{Auxiliary Lemmas}\label{sec:5}\setcounter{equation}{0}
In this section we derive the estimates that we used in Section \ref{sec:4}.
We fix $n\in\mathbb N$ and a probability measure $P\in \mathcal P^{(I)}_s$. Furthermore,
we fix a sequence of stopping times
$0=\theta_0<\theta_1<...<\theta_n$ for which we assume that for any $i<n$,
$
\frac{B_{\theta_i}}{B_{\theta_{i-1}}}\in \left
\{\exp(-\overline{\sigma}\sqrt{T/n}),0,\exp(\overline{\sigma}\sqrt{T/n})
\right\}$ $P$--a.s.,
\begin{eqnarray*}
&P\left(\frac{B_{\theta_i}}{B_{\theta_{i-1}}}=\exp(\overline{\sigma}\sqrt {T/n})|\mathcal F_{\theta_{i-1}}\right)
\in \frac{1}{1+\exp(\overline{\sigma}\sqrt{T/n})}\left[\exp\left(-4\overline{\sigma}\sqrt{T/n}\right)\underline{\sigma}^2/\overline{\sigma}^2,1\right],\\
& P\left(\frac{B_{\theta_i}}{B_{\theta_{i-1}}}=\exp(-\overline{\sigma}\sqrt{T/n})|\mathcal F_{\theta_{i-1}}\right)=\exp(\overline{\sigma}\sqrt{T/n})
P\left(\frac{ B_{\theta_i}}{ B_{\theta_{i-1}}}=\exp(\overline{\sigma}\sqrt {T/n})|\mathcal F_{\theta_{i-1}}\right)\label{4.2},\\
& P\left({B_{\theta_i}}={ B_{\theta_{i-1}}}|\mathcal F_{\theta_{i-1}}\right)=1-
P\left(\frac{B_{\theta_i}}{B_{\theta_{i-1}}}=\exp(\overline{\sigma}\sqrt {T/n})|\mathcal F_{\theta_{i-1}}\right)\label{4.3}\\
&- P\left(\frac{B_{\theta_i}}{B_{\theta_{i-1}}}=-\exp(\overline{\sigma}\sqrt {T/n})|\mathcal F_{\theta_{i-1}}\right),\nonumber
\end{eqnarray*}
and
$E_P(\theta_{i+1}-\theta_i|\mathcal F_{\theta_i})=T/n+O(n^{-3/2})$.
Observe that the stopping times $0=\theta_0<\theta_1<...<\theta_n$
from both Section 4.1 and Section 4.2 satisfy the above conditions.

We start with proving the following bound.
\begin{lem}\label{moments}
\[E_P\left(\sup_{0\leq t\leq T\vee\theta_n}B^4_t\right)=O(1)s^4.\]
\end{lem}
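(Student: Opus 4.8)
The plan is to control the running supremum of $B$ on the random interval $[0, T\vee\theta_n]$ by splitting into the deterministic part $[0,T]$ and the discrete skeleton $\theta_0 < \theta_1 < \dots < \theta_n$, and to exploit the fact that between consecutive $\theta_i$'s the path of $B$ stays within a multiplicative factor $\exp(\overline\sigma\sqrt{T/n})$ of its endpoint values. First I would observe that since the canonical process $B$ is a strictly positive $P$--martingale, so is $B$ stopped at any of the $\theta_i$; the discrete process $\{B_{\theta_i}\}_{i=0}^n$ is then a discrete martingale, and more to the point $\{B^2_{\theta_i}\}$ is a submartingale. By the definition of the $\theta_i$ in Section~\ref{sec:5}, on the interval $[\theta_{i-1},\theta_i)$ the construction via $\rho^{(\theta_{i-1})}$ and $\kappa^{(\theta_{i-1})}$ in (\ref{3.0}) forces $|\ln B_t - \ln B_{\theta_{i-1}}| \le \overline\sigma\sqrt{T/n}$ (the inner excursion never exceeds the barrier level), hence $B_t \le \exp(\overline\sigma\sqrt{T/n})\, B_{\theta_{i-1}}$ for all $t\in[\theta_{i-1},\theta_i)$. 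Therefore
\[
\sup_{0\le t\le \theta_n} B_t \le \exp\!\left(\overline\sigma\sqrt{T/n}\right)\max_{0\le i\le n} B_{\theta_i} \le \exp(\overline\sigma\sqrt T)\, \max_{0\le i\le n} B_{\theta_i},
\]
so it suffices to bound $E_P\big(\max_{0\le i\le n} B^4_{\theta_i}\big)$ and, separately, $E_P\big(\sup_{0\le t\le T} B^4_t\big)$, each by $O(1)s^4$.

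For the second of these, I would use that under any $P\in\mathcal P^{(I)}_s$ the process $B$ is the Dol\'eans--Dade exponential of a martingale $M$ with $d\langle M\rangle_t/dt \le \overline\sigma^2$. A standard computation gives that $B^p_t = s^p\exp\big(pM_t - \tfrac{p}{2}\langle M\rangle_t\big)$, and since $\exp\big(pM_t - \tfrac{p^2}{2}\langle M\rangle_t\big)$ is a supermartingale (indeed a martingale by Novikov, as $\langle M\rangle_T\le \overline\sigma^2 T$ is bounded), we get $E_P[B^p_t]\le s^p\exp\big(\tfrac{p^2-p}{2}\overline\sigma^2 T\big)$, uniformly in $t\le T$; taking $p=8$ and applying Doob's $L^2$ maximal inequality to the nonnegative submartingale $B^4_t$ yields $E_P\big(\sup_{0\le t\le T} B^8_t\big) \le 4\, E_P[B^8_T] = O(1)s^8$, and Jensen (or Cauchy--Schwarz) then gives $E_P\big(\sup_{0\le t\le T} B^4_t\big) = O(1)s^4$. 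For the skeleton term, I would argue the same way but along the discrete filtration $\{\mathcal F_{\theta_i}\}$: since $\{B_{\theta_i}\}$ is a discrete positive martingale with $B_{\theta_n}$ having all moments bounded by the above (because $B_{\theta_n}\le \exp(\overline\sigma\sqrt T)\, B_{\theta_{n-1}}\le\cdots$, or more simply because $\{B^8_{\theta_i}\}$ is a submartingale dominated in expectation by $E_P[B^8_{\theta_n}]$, and $\theta_n$ has finite expectation so optional stopping of the uniformly integrable martingale $B^8_{t\wedge\theta_n}$ — wait, $B^8$ is not a martingale), I would instead directly invoke that $\{B^4_{\theta_i}\}_{i=0}^n$ is a nonnegative submartingale and apply Doob's inequality in discrete time: $E_P\big(\max_{0\le i\le n} B^8_{\theta_i}\big)\le 4\, E_P[B^8_{\theta_n}]$. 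It then remains to bound $E_P[B^8_{\theta_n}]$, which follows since the exponential martingale $\exp(8M_t - 32\langle M\rangle_t)$ evaluated at the stopping time $\theta_n$ has expectation $1$ by optional stopping (applicable because $\langle M\rangle_{\theta_n}\le\overline\sigma^2\theta_n$ and $E_P[\theta_n]<\infty$, after a localization/uniform integrability argument using boundedness of the quadratic variation density), giving $E_P[B^8_{\theta_n}]\le s^8 E_P[\exp(28\langle M\rangle_{\theta_n})]$; this last expectation is where one must be careful.

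The main obstacle I anticipate is precisely controlling $E_P[B^8_{\theta_n}]$: unlike on the deterministic interval $[0,T]$, the quadratic variation $\langle M\rangle_{\theta_n}$ accumulated up to the random time $\theta_n$ is not a priori bounded by a deterministic constant, so Novikov's condition is not immediate, and one cannot simply write $E_P[B^8_{\theta_n}]\le s^8\exp(28\overline\sigma^2\cdot\text{const})$. I would handle this by a telescoping/inductive argument over $i=0,1,\dots,n$: using that $\frac{B_{\theta_i}}{B_{\theta_{i-1}}}\in\{e^{-\overline\sigma\sqrt{T/n}},\,1,\,e^{\overline\sigma\sqrt{T/n}}\}$ with the conditional probabilities specified in Section~\ref{sec:5}, one computes $E_P\big(B^4_{\theta_i}\,\big|\,\mathcal F_{\theta_{i-1}}\big) = B^4_{\theta_{i-1}}\cdot\big(1 + O(T/n)\big)$ — the increment is a bounded random variable taking only three values, so the conditional fourth moment of the ratio is $1 + O(\overline\sigma^2 T/n)$ by a direct expansion in $\sqrt{T/n}$. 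Iterating, $E_P[B^4_{\theta_n}]\le s^4\big(1 + C/n\big)^n \le s^4 e^{C} = O(1)s^4$ (and similarly for the eighth moment with a larger constant, to feed Doob's inequality). This telescoping estimate sidesteps the need for any uniform bound on $\langle M\rangle_{\theta_n}$ and is the natural discrete analogue of the exponential-martingale bound; combining it with Doob's maximal inequality along $\{\mathcal F_{\theta_i}\}$ and the pathwise bound $B_t\le\exp(\overline\sigma\sqrt T)B_{\theta_{i-1}}$ on each $[\theta_{i-1},\theta_i)$ completes the proof.
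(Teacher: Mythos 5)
Your proposal is correct and follows essentially the same route as the paper: the same splitting of $[0,T\vee\theta_n]$ into $[0,T]$ and the skeleton, the same telescoping bound $E_P(B^4_{\theta_{i+1}}\mid\mathcal F_{\theta_i})=B^4_{\theta_i}(1+O(1/n))$ from the three-point ratio distribution (iterated to $(1+O(1/n))^n=O(1)$), the same exponential-supermartingale bound for $E_P[B^4_T]$, and Doob's maximal inequality to pass to the supremum. The only cosmetic differences are that the paper applies Doob's $L^4$ inequality directly to the continuous martingale $B_{t\wedge\theta_n}$ (so only fourth moments are needed, and the pathwise bound $B_t\le e^{\overline\sigma\sqrt{T/n}}B_{\theta_{i-1}}$ on each subinterval is not required here), whereas you route through eighth moments and the discrete maximum; both work.
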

\begin{proof}
Clearly, for any $i<n$,
\begin{eqnarray*}
&E_P(B^4_{\theta_{i+1}}-B^4_{\theta_{i}}|\mathcal F_{\theta_i})= B^4_{\theta_{i}}
P\left(\frac{B_{\theta_{i+1}}}{B_{\theta_{i}}}=\exp(\overline{\sigma}\sqrt {T/n})|\mathcal F_{\theta_{i}}\right)\times\\
&\left(\exp(4\overline{\sigma}\sqrt{T/n})-1+\exp(\overline{\sigma}\sqrt{T/n})
(\exp(-4\overline{\sigma}\sqrt{T/n})-1)\right)\leq B^4_{\theta_{i}} O(1/n).
\end{eqnarray*}
Hence, $E_P(B^4_{\theta_{n}})\leq s^4(1+O(1/n))^n= O(1)s^4$. This together with
the Doob inequality gives that
\begin{equation}\label{5.1}
E_P\left(\sup_{0\leq t\leq \theta_n}B^4_t\right)=O(1)s^4.
\end{equation}
Next, we notice that the inequality $B^{-1}_t\sqrt\frac{d\langle B\rangle _t}{dt}\leq\overline\sigma$
together with the
It\^{o} formula implies
that
$\exp(-6\overline\sigma^2 t)B^4_t$, $t\geq 0$ is a super--martingale. In particular,
$E_P B^4_T\leq \exp(6\overline\sigma^2 T)s^4$. Thus, from the
Doob inequality and (\ref{5.1}) we obtain
\[E_P\left(\sup_{0\leq t\leq T\vee\theta_n}B^4_t\right)\leq
E_P\left(\sup_{0\leq t\leq T}B^4_t\right)+E_P\left(\sup_{0\leq t\leq \theta_n}B^4_t\right)=
O(1)s^4\]
and the proof is completed.
\end{proof}
Next, we prove the following.
\begin{lem}\label{lem5.2}
For any $i=0,1,...,n-1$, $E_P((\theta_{i+1}-\theta_i)^4|\mathcal F_{\theta_i})=O(n^{-4})$.
\end{lem}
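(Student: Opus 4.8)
\medskip

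The plan is to control the fourth conditional moment of a single increment $\theta_{i+1}-\theta_i$ by combining the structural description of the stopping times in \eqref{3.0} with the fact that $B$ has volatility bounded between $\underline{\sigma}>0$ and $\overline{\sigma}<\infty$. Recall that $\theta_{i+1}=\kappa^{(\theta_i)}_{A_i}$ where $A_i\in(0,\overline\sigma\sqrt{T/n}]$, and that $\kappa^{(\theta)}_A$ is built from two successive exit times of $\ln B$ from intervals of length at most $\overline\sigma\sqrt{T/n}$. So first I would reduce the problem, via the bound $\theta_{i+1}-\theta_i\le (\rho^{(\theta_i)}_{A_i}-\theta_i)+(\kappa^{(\theta_i)}_{A_i}-\rho^{(\theta_i)}_{A_i})$ and the elementary inequality $(a+b)^4\le 8(a^4+b^4)$, to estimating the fourth conditional moment of the exit time of the martingale $B$ (equivalently $M=\int_0^\cdot dB_u/B_u$, which has volatility in $I$) from an interval of width $O(\sqrt{T/n})$ around its starting point. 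It suffices to show such an exit time $\tau$ satisfies $E_P[(\tau-\theta)^4\mid\mathcal F_\theta]=O(n^{-4})$.

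\medskip

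For that key estimate I would use the lower volatility bound. Since $B_t\ge \exp(-\overline\sigma\sqrt{T/n})B_\theta\ge \tfrac12 B_\theta$ for $n$ large and $t$ before the exit time, we have $d\langle B\rangle_t = B_t^2(\text{vol})^2\,dt\ge c\,B_\theta^2\underline\sigma^2\,dt$ on the relevant interval, so $\langle B\rangle_{\tau}-\langle B\rangle_\theta\ge c\,B_\theta^2\underline\sigma^2(\tau-\theta)$. On the other hand, on $[\theta,\tau]$ the martingale $B$ stays within a band of width $O(B_\theta\sqrt{T/n})$ of $B_\theta$, so $(B_{t\wedge\tau}-B_\theta)$ is a bounded martingale and $E_P[\langle B\rangle_{\tau}-\langle B\rangle_\theta\mid\mathcal F_\theta]=E_P[(B_\tau-B_\theta)^2\mid\mathcal F_\theta]=O(B_\theta^2 T/n)$. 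That already gives $E_P[\tau-\theta\mid\mathcal F_\theta]=O(n^{-1})$; to get the fourth moment I would iterate, using the strong Markov-type / tower argument: conditionally on $\mathcal F_t$ for any $\theta\le t<\tau$, the same reasoning gives $E_P[\tau-t\mid\mathcal F_t]\le K n^{-1}$ for a deterministic constant $K$ (because the band width and the volatility bounds do not depend on the history). A standard lemma then upgrades a uniform bound $E_P[\tau-t\mid\mathcal F_t]\le Kn^{-1}$ on all of $[\theta,\tau]$ to $E_P[(\tau-\theta)^m\mid\mathcal F_\theta]\le m!\,(Kn^{-1})^m$: indeed $E_P[(\tau-\theta)^m\mid\mathcal F_\theta]=\int_\theta^\infty m(u-\theta)^{m-1}P(\tau>u\mid\mathcal F_\theta)\,du$, and an induction using $E_P[(\tau-\theta)^{m}\mid\mathcal F_\theta]=E_P\!\big[\int_\theta^\tau m(\tau-t)^{m-1}dt\mid\mathcal F_\theta\big]=E_P\!\big[\int_\theta^\tau m(m-1)(\tau-t)^{m-2}E_P[\tau-t\mid\mathcal F_t]\,dt\big]\le Kn^{-1}\, m\, E_P[(\tau-\theta)^{m-1}\mid\mathcal F_\theta]$ closes the recursion. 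Taking $m=4$ yields the claim with the constant absorbed into the $O(\cdot)$.

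\medskip

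Assembling: applying this with $\tau=\rho^{(\theta_i)}_{A_i}$ (exit from a band of width $A_i\le\overline\sigma\sqrt{T/n}$) and then, conditionally on $\mathcal F_{\rho^{(\theta_i)}_{A_i}}$, with $\tau=\kappa^{(\theta_i)}_{A_i}$ (exit from a band whose endpoints are $B_{\theta_i}$ and $B_{\theta_i}\exp(\pm\overline\sigma\sqrt{T/n})$, again of width $O(\sqrt{T/n})$), the tower property gives $E_P[(\theta_{i+1}-\theta_i)^4\mid\mathcal F_{\theta_i}]=O(n^{-4})$ uniformly in $i$ and in the starting point, which is exactly the statement. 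The main obstacle is the bookkeeping needed to make the band-width and volatility constants genuinely uniform — both over $i$ and over the (random) value $B_{\theta_i}$ — so that the recursion constant $K$ is deterministic; this is where the standing assumptions $\underline\sigma>0$ and $\overline\sigma<\infty$ are used in an essential way, and it is the one place where some care, rather than a routine estimate, is required.
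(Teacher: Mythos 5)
Your proof is correct, but it takes a genuinely different route from the paper's. The paper's argument is a one-liner: since $\frac{d\langle B\rangle_t}{dt}\geq\underline\sigma^2B_t^2$ and $B_t/B_{\theta_i}$ stays in $[\exp(-\overline\sigma\sqrt{T/n}),\exp(\overline\sigma\sqrt{T/n})]$ on $[\theta_i,\theta_{i+1}]$, one has $(\theta_{i+1}-\theta_i)^4\leq O(1)B_{\theta_i}^{-8}(\langle B\rangle_{\theta_{i+1}}-\langle B\rangle_{\theta_i})^4$, and the Burkholder--Davis--Gundy inequality bounds the conditional expectation of the right-hand side by $O(1)E_P((B_{\theta_{i+1}}-B_{\theta_i})^8\mid\mathcal F_{\theta_i})=O(n^{-4})B_{\theta_i}^8$, since the increment is deterministically $O(\sqrt{T/n})B_{\theta_i}$. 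You instead avoid BDG entirely: you split $\kappa^{(\theta_i)}_{A_i}$ into its two constituent exit times, use the It\^o isometry at first order to get the uniform bound $E_P[\tau-t\mid\mathcal F_t]\leq Kn^{-1}$ on $\{t<\tau\}$ (the $B_{\theta_i}^2$ factors cancel between the band width and the volatility lower bound, which is the uniformity point you correctly flag), and then invoke the classical lemma upgrading a uniform conditional first-moment bound to $E_P[(\tau-\theta)^m\mid\mathcal F_\theta]\leq m!(Kn^{-1})^m$. Your approach is more elementary and yields all moments at once, at the cost of the two-stage decomposition and the uniformity bookkeeping; the paper's is shorter but leans on BDG at eighth-moment order. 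One small caveat: your displayed induction step, which inserts $E_P[\tau-t\mid\mathcal F_t]$ under the integral next to the factor $(\tau-t)^{m-2}$, is not literally valid since that factor is not $\mathcal F_t$-measurable; the standard form of the induction conditions the entire $(\tau-t)^{m-1}$ on $\mathcal F_t$ and applies the inductive bound at level $m-1$ before integrating. This is cosmetic and does not affect the validity of the lemma you invoke or of the conclusion.
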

\begin{proof}
Choose $i<n$. From the Burkholder--Davis--Gundy inequality,
the inequality  $\frac{d\langle B\rangle_t}{dt}\geq\underline\sigma^2 B^2_t$
and the fact that
$\frac{B_t}{B_{\theta_i}}\in [\exp(-\overline\sigma\sqrt{T/n}),\exp(\overline\sigma\sqrt{T/n})]$ for
$t\in [\theta_i,\theta_{i+1}]$
it follows that
\begin{eqnarray*}
&\underline\sigma^8\exp(-8\overline\sigma\sqrt{T/n})B^8_{\theta_i}E_P((\theta_{i+1}-\theta_i)^4|\mathcal F_{\theta_i})\leq\\
& E_P \left((\langle B\rangle_{\theta_{i+1}}-\langle B\rangle_{\theta_{i}})^4|\mathcal F_{\theta_i}\right)=O(1)
 E_P((B_{\theta_{i+1}}- B_{\theta_i})^8|\mathcal F_{\theta_i})=O(n^{-4}) B^8_{\theta_i}
\end{eqnarray*}
and the result follows.
\end{proof}
We arrive to our next estimate.
\begin{lem}\label{lem5.3}
$E_P\left(\max_{0\leq k\leq n}|\theta_k- kT/n|^4\right)= O(n^{-2}).$
\end{lem}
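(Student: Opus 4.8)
The plan is to split the error $\theta_k-kT/n$ into a ``predictable'' part of size $O(n^{-1/2})$ and a martingale part, and to control the latter with the Burkholder--Davis--Gundy inequality. Write $\Delta_i:=\theta_{i+1}-\theta_i$ and $m_i:=E_P(\Delta_i\,|\,\mathcal F_{\theta_i})$; by hypothesis $m_i=T/n+O(n^{-3/2})$, which by the $O$-convention fixed in Section~\ref{sec:3} means $|m_i-T/n|\le c\,n^{-3/2}$ for a deterministic constant $c$ and all $i<n$. Put $d_i:=\Delta_i-m_i$ and $N_k:=\sum_{i=0}^{k-1}d_i$, so that $(N_k)_{0\le k\le n}$ is a martingale relative to $(\mathcal F_{\theta_k})_{k=0}^n$ with martingale differences $d_i$, and
\[
\theta_k-\frac{kT}{n}=\sum_{i=0}^{k-1}\Big(m_i-\frac{T}{n}\Big)+N_k,\qquad 0\le k\le n.
\]

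For the predictable part, $\max_{0\le k\le n}\big|\sum_{i=0}^{k-1}(m_i-T/n)\big|\le\sum_{i=0}^{n-1}|m_i-T/n|\le c\,n^{-1/2}$, whose fourth power is $O(n^{-2})$. Since $(a+b)^4\le 8(a^4+b^4)$, it remains to show $E_P\big(\max_{0\le k\le n}|N_k|^4\big)=O(n^{-2})$.

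For this I would first record moment bounds on the increments $d_i$. From Lemma~\ref{lem5.2} and the conditional Jensen inequality, $E_P(\Delta_i^2\,|\,\mathcal F_{\theta_i})\le\big(E_P(\Delta_i^4\,|\,\mathcal F_{\theta_i})\big)^{1/2}=O(n^{-2})$, while $m_i^2=O(n^{-2})$ and $m_i^4=O(n^{-4})$; hence $E_P(d_i^2\,|\,\mathcal F_{\theta_i})=O(n^{-2})$ and $E_P(d_i^4\,|\,\mathcal F_{\theta_i})=O(n^{-4})$, with deterministic constants. By the Burkholder--Davis--Gundy inequality for discrete martingales,
\[
E_P\Big(\max_{0\le k\le n}|N_k|^4\Big)\le C\,E_P\Big(\Big(\sum_{i=0}^{n-1}d_i^2\Big)^2\Big)=C\sum_{0\le i,j\le n-1}E_P(d_i^2 d_j^2).
\]
The $n$ diagonal terms contribute $\sum_i E_P(d_i^4)=n\cdot O(n^{-4})=O(n^{-3})$. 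For $i<j$ the variable $d_i^2$ is $\mathcal F_{\theta_j}$-measurable, so $E_P(d_i^2 d_j^2)=E_P\big(d_i^2\,E_P(d_j^2\,|\,\mathcal F_{\theta_j})\big)\le C' n^{-2}\,E_P(d_i^2)\le C'' n^{-4}$, and there are at most $n^2$ such pairs, contributing $O(n^{-2})$. Summing, $E_P(\max_k|N_k|^4)=O(n^{-2})$, and combining with the estimate for the predictable part finishes the proof.

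All the steps after the martingale decomposition are routine bookkeeping; the only point needing a little care is that the $O(\cdot)$ quantities in the hypotheses are random variables admitting deterministic bounds, so that summing $n$ of them and passing to expectations is legitimate, which is exactly what the $O$-convention guarantees. I do not expect a genuine obstacle here — the key idea is just the decomposition into a deterministic-order drift and a martingale, after which Lemma~\ref{lem5.2} and BDG do the work.
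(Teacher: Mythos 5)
Your proof is correct and follows essentially the same route as the paper: the same decomposition of $\theta_k-kT/n$ into a predictable drift of size $O(n^{-1/2})$ plus a martingale, the fourth-moment bound on the increments obtained from Lemma~\ref{lem5.2} and Jensen, and the Burkholder--Davis--Gundy inequality. The only cosmetic difference is that you bound $E_P\big(\big(\sum_i d_i^2\big)^2\big)$ by expanding the square and conditioning on the cross terms, whereas the paper uses the inequality $\big(\sum_{i=1}^n a_i\big)^2\le n\sum_{i=1}^n a_i^2$ together with $E_P[Z_i^4]=O(n^{-4})$; both yield the same $O(n^{-2})$ estimate.
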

\begin{proof}
Set $Z_i:=\theta_{i}-\theta_{i-1}-E_P(\theta_{i}-\theta_{i-1}|\mathcal F_{\theta_{i-1}})$, $i=1,...,n$.
We use the fact that the expectation of the difference between two sequel times equals approximately to the time step.
Formally, for any $i$, we have $E_P(\theta_{i}-\theta_{i-1}-T/n|\mathcal F_{\theta_{i-1}})=O(n^{-3/2})$. Hence,
\[\max_{0\leq k\leq n}|\theta_k- kT/n|= O(n^{-1/2})+\max_{1\leq k\leq n}|\sum_{i=1}^{k}Z_i|.\]
In view of the inequality $(a+b)^4\leq 8(a^4+b^4)$, $a,b\geq 0$ it remains to prove that
$E_P \left(\left(\max_{1\leq k\leq n}|\sum_{i=1}^{k}Z_i|\right)^4\right)=O(n^{-2})$.
From the Jensen inequality and Lemma \ref{lem5.2} it follows that
$E_P\left((E_P(\theta_{i}-\theta_{i-1}|\mathcal F_{\theta_{i-1}}))^4\right)=O(n^{-4})$ for all $i$.
This together with the inequality $(a-b)^4\leq a^4+b^4,$ $a,b\geq 0$ implies that
$E_P[Z^4_i]=O(n^{-4})$ for all $i$.
Thus, from the Burkholder--Davis--Gundy inequality
applied to the martingale $\sum_{i=1}^k Z_i$, $k=1,...,n$, and the inequality
$\left(\sum_{i=1}^n a_i\right)^2\leq n\left(\sum_{i=1}^n a^2_i\right)$, $a_1,...,a_n\geq 0$,
we obtain
\[E_P \left(\left(\max_{1\leq k\leq n}|\sum_{i=1}^{k}Z_i|\right)^4\right)=O(1)E_P\left(\left(\sum_{i=1}^n Z^2_i\right)^2\right)=O(n)\sum_{i=1}^n E_P Z^4_i=O(n^{-2})\]
as required.
\end{proof}
We end this section with proving the next estimate.
\begin{lem}\label{stoppingtimes}
\[E_P\left(\max_{0\leq k\leq n}\left|\int_{0}^{\theta_k} h(t,B_t) dt-\frac{T}{n}\sum_{i=0}^{k-1}  h(i T/n, B_{\theta_i})\right|\right)= O((1+s) n^{-1/2}).\]
\end{lem}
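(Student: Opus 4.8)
The plan is to split the quantity inside the maximum into a ``time-grid error'' coming from the fact that $\int_0^{\theta_k}h(t,B_t)\,dt$ is compared against a Riemann sum on the grid points $\theta_0,\dots,\theta_{k-1}$ but evaluated with the ``wrong'' time labels $iT/n$, and a ``martingale/oscillation error'' coming from replacing $B_t$ on $[\theta_i,\theta_{i+1})$ by its value $B_{\theta_i}$ at the left endpoint. Concretely, I would write
\begin{eqnarray*}
&\int_0^{\theta_k} h(t,B_t)\,dt-\frac{T}{n}\sum_{i=0}^{k-1}h(iT/n,B_{\theta_i})
=\sum_{i=0}^{k-1}\int_{\theta_i}^{\theta_{i+1}}\bigl(h(t,B_t)-h(iT/n,B_{\theta_i})\bigr)\,dt\\
&\quad+\sum_{i=0}^{k-1}\Bigl((\theta_{i+1}-\theta_i)-\frac{T}{n}\Bigr)h(iT/n,B_{\theta_i}),
\end{eqnarray*}
and bound the maximum over $k$ of the absolute value of each of the two sums separately.

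For the first sum, the Lipschitz hypothesis (\ref{2.function}) gives $|h(t,B_t)-h(iT/n,B_{\theta_i})|\le L\bigl((1+|B_t|)|t-iT/n|+|B_t-B_{\theta_i}|\bigr)$ for $t\in[\theta_i,\theta_{i+1})$; here $|t-iT/n|\le \theta_n\vee T$ stays bounded in $L^4$ by Lemma \ref{lem5.3} (plus a trivial $T$), and $|B_t-B_{\theta_i}|\le (\exp(\overline\sigma\sqrt{T/n})-1)\sup_{u}B_u=O(n^{-1/2})\sup_u B_u$ since the log-ratio is confined to $[-\overline\sigma\sqrt{T/n},\overline\sigma\sqrt{T/n}]$ on that interval. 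Multiplying by the interval length $\theta_{i+1}-\theta_i$ and summing telescopes the lengths to $\theta_n$, so after a Cauchy--Schwarz (or Hölder) split against the $L^4$ bound on $\sup_u B_u$ from Lemma \ref{moments} and on $\theta_n\vee T$, this sum is $O((1+s)n^{-1/2})$. The key point that makes $|t-iT/n|$ small uniformly is that $\theta_i$ is close to $iT/n$: by Lemma \ref{lem5.3}, $\max_k|\theta_k-kT/n|$ is $O(n^{-1/2})$ in $L^4$, which is what upgrades the naive bound.

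For the second sum, set $W_i:=(\theta_{i}-\theta_{i-1})-\frac{T}{n}$ and split $W_i=Z_i+E_P(\theta_i-\theta_{i-1}-\frac{T}{n}\mid\mathcal F_{\theta_{i-1}})$ with $Z_i$ the martingale increment as in Lemma \ref{lem5.3}; the conditional-expectation part is $O(n^{-3/2})$ per step, contributing $O(n^{-1/2})$ after summing $n$ terms and multiplying by the bounded-in-$L^4$ factor $h(iT/n,B_{\theta_i})=O(1)(1+\sup_u B_u)(1+T)$. For the genuinely random part $\sum_i Z_i h(iT/n,B_{\theta_i})$, since $h(iT/n,B_{\theta_i})$ is $\mathcal F_{\theta_i}$-measurable and $Z_{i+1}$ is a martingale increment, this is itself a martingale in $k$; I would apply Doob's $L^2$ inequality, then bound $E_P\sum_i Z_i^2 h(iT/n,B_{\theta_i})^2$ by pulling out $\sup_u B_u$, using $E_P[Z_i^2\mid\mathcal F_{\theta_i}]=O(n^{-2})$ from Lemma \ref{lem5.2}, summing $n$ of them to get $O(n^{-1})$, and finishing with Cauchy--Schwarz against the $L^4$ moment bound of Lemma \ref{moments}; the square root yields $O((1+s)n^{-1/2})$. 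The main obstacle is purely bookkeeping: one must be careful that the factor $h(iT/n,B_{\theta_i})$ is not bounded but only bounded by $(1+\sup_u B_u)$ times a constant, so every estimate has to be carried through with a Cauchy--Schwarz (or Hölder with exponents $4$ and $4/3$) split that isolates this factor and controls it by Lemma \ref{moments}; combining the two sums then gives the claimed $O((1+s)n^{-1/2})$.
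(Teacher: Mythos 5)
Your proposal is correct and follows essentially the same route as the paper: the identical three-way decomposition into the oscillation term $\sum_i\int_{\theta_i}^{\theta_{i+1}}(h(t,B_t)-h(iT/n,B_{\theta_i}))\,dt$ (the paper's $\theta_n J_3$), the conditional-drift term (the paper's $J_1$), and the martingale term handled by Doob's inequality together with Lemmas \ref{moments}, \ref{lem5.2} and \ref{lem5.3} (the paper's $J_2$). The only cosmetic difference is that you invoke the conditional bound $E_P[Z_i^2\mid\mathcal F_{\theta_{i-1}}]=O(n^{-2})$ where the paper uses an unconditional Cauchy--Schwarz split with $E_P[Z_i^4]=O(n^{-4})$; both yield the same $O((1+s)n^{-1/2})$ estimate.
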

\begin{proof}
Clearly,
\[\max_{0\leq k\leq n}\left|\int_{0}^{\theta_k} h(t,B_t) dt-\frac{T}{n}\sum_{i=0}^{k-1} h(i T/n, B_{\theta_i})\right|\leq J_1+J_2+\theta_n J_3\]
where
\begin{eqnarray*}
&J_1:=\max_{1\leq k\leq n}\left|\sum_{i=0}^{k-1}h(iT/n, B_{\theta_i})
\left(E_P(\theta_{i+1}-\theta_i|\mathcal F_{\theta_i})-T/n\right)\right|,\\
&J_2:=
\max_{1\leq k\leq n}\left|\sum_{i=0}^{k-1} h(iT/n, B_{\theta_i})
\left(\theta_{i+1}-\theta_i-E_P(\theta_{i+1}-\theta_i|\mathcal F_{\theta_i})\right)\right|,\\
&\mbox{and} \ J_3:=\left(\max_{0\leq k\leq n-1}\sup_{\theta_k\leq t\leq\theta_{k+1}}|h(t,B_t)-h(k T/n, B_{\theta_k}|\right).
\end{eqnarray*}
We have $E_P(\theta_{i+1}-\theta_i|\mathcal F_{\theta_i})=T/n+O(n^{-3/2})$. Hence, from the bound
$h(t,x)=O(1)(1+|x|)(1+t)$, Lemma \ref{moments} and the Jensen inequality it follows that
\[E_P[J_1]= O(n^{-1/2})E_P(1+\max_{0\leq k\leq n-1}B_{\theta_k})
=O((1+s)n^{-1/2}).\]
Next, we estimate $J_2$. We observe that the stochastic process
\[\sum_{i=0}^{k-1} h(iT/n, B_{\theta_i})
\left(\theta_{i+1}-\theta_i-E_P(\theta_{i+1}-\theta_i|\mathcal F_{\theta_i})\right), \ \ k=1,...,n\]
is a martingale. Thus, from the Doob inequality, the Cauchy--Schwarz inequality,
Lemmas \ref{moments}--\ref{lem5.2} and the above bound on $h$ we obtain
\begin{eqnarray*}
&E_P[J^2_2]= O(1)\sum_{i=0}^{n-1} E_P\left(h^2(i T/n, B^2_{\theta_i})\left(\theta_{i+1}-\theta_i-E_P(\theta_{i+1}-\theta_i|\mathcal F_{\theta_i})\right)^2 \right) \\
&=O(1)\sum_{i=0}^{n-1}\left(E_P\left(h^4(i T/n, B^2_{\theta_i})\right)\right)^{1/2}
 \left(E_P\left(\left(\theta_{i+1}-\theta_i-E_P(\theta_{i+1}-\theta_i|\mathcal F_{\theta_i})\right)^4\right)\right)^{1/2}\\
 &=O((1+s)^2 n^{-1}).
\end{eqnarray*}
From the Jensen inequality we conclude that $E_P[J_2]=O((1+s) n^{-1/2})$.

Finally, we estimate $E_P[\theta_n J_3].$
From (\ref{2.function})
and the fact that
$\frac{B_t}{B_{\theta_k}}=1+O(1/\sqrt n)$
for $t\in [\theta_k,\theta_{k+1}]$
it follows that
\[J_3\leq
O(n^{-1/2})\max_{0\leq k\leq n-1}B_{\theta_k}+O(1)\max_{0\leq k\leq n-1}[(1+B_{\theta_k})\sup_{\theta_k\leq t\leq\theta_{k+1}}|t-kT/n|].\]
Observe that $\max_{0\leq k\leq n-1}\sup_{\theta_k\leq t\leq\theta_{k+1}}|t-kT/n|\leq T/n+\max_{1\leq k\leq n}|\theta_k-k T/n|$.
This together with the Cauchy--Schwarz inequality, Lemma \ref{moments} and Lemma \ref{lem5.3} gives
\begin{eqnarray*}
&E_P[\theta_n J_3]=O(n^{-1/2})\left(E_P\left[\theta^2_n\right]\right)^{1/2} \left(E_P\left(\max_{0\leq k\leq n-1}B^2_{\theta_k}\right)\right)^{1/2}+\\
&O(1)\frac{T}{n}\left(E_P\left[\theta^2_n\right]\right)^{1/2}\left(E_P\left(\max_{0\leq k\leq n-1}(1+B_{\theta_k})^2\right)\right)^{1/2}+\\
&O(1)\left(E_P\left[\theta^2_n\right]\right)^{1/2}\left(E_P\left(\max_{0\leq k\leq n-1}(1+B_{\theta_k})^4\right)\right)^{1/4}
\left(E_P\left(\max_{1\leq k\leq n}|\theta_k-k T/n|^4\right)\right)^{1/4}\\
&=O\left((1+s)n^{-1/2}\right)
\end{eqnarray*}
and the proof is completed.
\end{proof}
\section{Game Options and Numerical Results}\label{sec:6}\setcounter{equation}{0}
In this section we apply Theorem \ref{thm2.1} and provide
numerical analysis for
path--independent game options with the payoffs
$Y_t=f(t,B_t)$ and $X_t=f(t,B_t)$, $t\in [0,T]$, and we set $Z\equiv 0$.
First (for the above payoffs), we establish the connection between the super--hedging price of game options and Dynkin games,
in the model uncertainty setup.
\subsection{Game Options}
A game contingent claim (GCC) or game option, which was introduced in
\cite{K}, is defined
as a contract between the seller and the buyer of the option such
that both have the right to exercise it at any time up to a
maturity date (horizon) $T$. We consider the following GCC with Markovian payoffs.
If the buyer exercises the contract
at time $t$ then he receives the payment $Y_t=f(t,B_t)$, but if the seller
exercises (cancels) the contract before the buyer then the latter
receives $X_t=g(t,B_t)$. The difference $X_t-Y_t$ is the penalty
which the seller pays to the buyer for the contract cancellation.
In short, if the seller will exercise at a stopping time
$\gamma\leq{T}$ and the buyer at a stopping time $\tau\leq{T}$
then the former pays to the latter the amount
$H(\gamma,\tau)$ given by (\ref{1.1}).

Next, we introduce the setup of super--hedging for the seller (the buyer setup is symmetrical).
Recall the natural filtration, $\mathcal F=\mathcal F_t$, $t\geq 0$. We denote by
 $L(B,\mathcal P^{(I)}_s)$ the set of all $\mathcal F$--predictable processes $\Delta=\{\Delta_t\}_{t=0}^T$
 such that for any $P\in\mathcal P^{(I)}_s$, the stochastic (It\^{o}) integral $\int_{0}^t \Delta_u dB_u$, $t\in [0,T]$ is well defined and a super--martingale
with respect to $\mathcal F$.
We define a hedge for the seller as a triplet $(x,\Delta,\gamma)\in \mathbb R\times L(B,\mathcal P^{(I)}_s)\times\mathcal T_T$ which consists of
an initial capital $x$, a trading strategy
$\Delta=\{\Delta_t\}_{t=0}^T$ and a stopping time $\gamma$. A hedge
$(x,\Delta,\gamma)$ is perfect if for any stopping time (for the buyer) $\tau\in\mathcal T_T$ we have the inequality
$$x+\int_{0}^{\gamma\wedge\tau} \Delta_u dB_u\geq H(\gamma,\tau) \ \ P-\mbox{a.s.} \ \mbox{for} \ \mbox{all} \ \ P\in\mathcal P^{(I)}_s.$$
The super--hedging price is defined by
$$\mathbf V:=\inf\{x\in\mathbb R: \ \exists (\Delta,\gamma) \ \mbox{such} \ \mbox{that} \ (x,\Delta,\gamma) \ \mbox{is}
\ \mbox{a} \ \mbox{perfect} \ \mbox{hedge}\}. $$
\begin{lem}\label{lem6.1}
The super--hedging price is given by
$\mathbf V=V^{(I)}(0,s).$ Moreover, there exists a perfect hedge with initial capital $V^{(I)}(0,s)$.
\end{lem}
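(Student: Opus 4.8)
The plan is to prove the two inequalities $\mathbf V\le V^{(I)}(0,s)$ and $\mathbf V\ge V^{(I)}(0,s)$ separately, and to obtain the existence of an optimal hedge along the way from the first inequality. For the inequality $\mathbf V\le V^{(I)}(0,s)$, the natural route is to exhibit a concrete perfect hedge with initial capital $V^{(I)}(0,s)$. To do this I would invoke the value process of the Dynkin game: define $\mathcal V_t:=V^{(I)}(t,B_t)$ (the continuity from Corollary \ref{cor2.3} makes this a well-defined adapted process), and take the seller's cancellation time to be $\gamma^*:=T\wedge\inf\{t: g(t,B_t)=V^{(I)}(t,B_t)\}$, which is optimal for Player 1 by Theorem 4.1 in \cite{BY} (in the form discussed in Remark \ref{rem2.1}). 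The key input is the submartingale/supermartingale structure of the value process established in \cite{BY}: on the stochastic interval $[0,\gamma^*]$ the process $\mathcal V_t+\int_0^t h(u,B_u)\,du$ is, for every $P\in\mathcal P^{(I)}_s$, a $P$-supermartingale (this is exactly where the robustness/worst-case nature of Player 1 is used). Then I would apply the nondominated optional decomposition theorem of \cite{Nu}: this supermartingale decomposes as $V^{(I)}(0,s)+\int_0^{t}\Delta_u\,dB_u - C_t$ with $\Delta\in L(B,\mathcal P^{(I)}_s)$ and $C$ nondecreasing, simultaneously under all $P\in\mathcal P^{(I)}_s$. Dropping the increasing part and checking the payoff inequality at $\gamma^*\wedge\tau$ — using $\mathcal V_{\gamma^*}=g(\gamma^*,B_{\gamma^*})=X_{\gamma^*}$ on $\{\gamma^*<\tau\}$ and $\mathcal V_\tau\ge f(\tau,B_\tau)=Y_\tau$ on $\{\tau\le\gamma^*\}$ (since $V^{(I)}\ge f$ always) — shows $(V^{(I)}(0,s),\Delta,\gamma^*)$ is a perfect hedge, giving both $\mathbf V\le V^{(I)}(0,s)$ and the asserted existence.

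For the reverse inequality $\mathbf V\ge V^{(I)}(0,s)$, I would argue that no perfect hedge can start below $V^{(I)}(0,s)$. Suppose $(x,\Delta,\gamma)$ is a perfect hedge. Fix an arbitrary $P\in\mathcal P^{(I)}_s$ and an arbitrary buyer stopping time $\tau\in\mathcal T_T$. Taking $P$-expectation in the perfect-hedge inequality $x+\int_0^{\gamma\wedge\tau}\Delta_u\,dB_u\ge H(\gamma,\tau)$ and using that $\int_0^\cdot\Delta_u\,dB_u$ is a $P$-supermartingale (so the stopped integral has nonpositive expectation), we get $x\ge E_P[H(\gamma,\tau)]$. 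Since $\gamma$ is a fixed stopping time in $\mathcal T_T$, this yields $x\ge \inf_{\gamma'\in\mathcal T_T}\sup_{P}\sup_{\tau}E_P[H(\gamma',\tau)] = \overline V^{(I)}(0,s)=V^{(I)}(0,s)$, where the last two equalities are the definition of the upper value and the existence of the game value \eqref{2.2-}. Taking the infimum over $x$ gives $\mathbf V\ge V^{(I)}(0,s)$.

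The main obstacle is the first inequality, specifically the legitimacy of applying the nondominated optional decomposition theorem of \cite{Nu} in the present continuous-path, volatility-uncertainty setting and with the game (two-sided, stopped) structure rather than the plain American one. I would need to verify that the family $\mathcal P^{(I)}_s$ satisfies the measurability/analyticity and (weak) compactness hypotheses required by \cite{Nu} — the weak compactness was already noted in Section \ref{sec:2} as a consequence of $\overline\sigma<\infty$ — and that the relevant supermartingale property of the value process on $[0,\gamma^*]$ holds $P$-a.s.\ for every $P$ simultaneously, which is precisely what \cite{BY} provides via the submartingale property of the (Snell-type) value process for the player facing nature. A secondary technical point is integrability: the Lipschitz bound \eqref{2.function} together with the $L^4$-type moment bound on $\sup_{t\le T}B_t$ (cf.\ Lemma \ref{moments}, or the direct estimate $E_P B_T^4\le e^{6\overline\sigma^2 T}s^4$) ensures $H(\gamma,\tau)$ is integrable under every $P$, so all the expectations above are well-defined and the supermartingale arguments are valid.
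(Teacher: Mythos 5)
Your proposal is correct and follows essentially the same route as the paper: the lower bound $\mathbf V\ge V^{(I)}(0,s)$ via the supermartingale property of the stochastic integral, and the construction of a perfect hedge with initial capital $V^{(I)}(0,s)$ by combining the supermartingale property of the stopped value process $V^{(I)}(t\wedge\gamma^{*},B_{t\wedge\gamma^{*}})$ from Theorem 4.1 of \cite{BY} with the nondominated optional decomposition theorem of \cite{Nu} (the paper verifies the hypothesis of \cite{Nu} by noting that $\mathcal P^{(I)}_s$ is saturated, exactly the point you flag), followed by the same case split on $\{\gamma^{*}<\tau\}$ and $\{\tau\le\gamma^{*}\}$. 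The only cosmetic difference is that you carry the running payoff $\int_0^t h(u,B_u)\,du$ through the argument, whereas the paper sets $h\equiv 0$ in Section 6.
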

\begin{proof}
As usual, the inequality $\mathbf V\geq V^{(I)}(0,s)$ is immediate. Indeed if
$(x,\Delta,\gamma)$ is a perfect hedge then from the super--martingale property of
$\int_{0}^t \Delta_u dB_u$, $t\in [0,T]$ we obtain that
for any $\tau\in \mathcal T_T$ and $P\in\mathcal P^{(I)}_s$
$$x\geq E_P\left[x+\int_{0}^{\gamma\wedge\tau} \Delta_u dB_u\right]\geq  E_P [H(\gamma,\tau)].$$
Thus $x\geq V^{(I)}(0,s)$
as required.

It remains to show that there exists a perfect hedge with initial capital
$V^{(I)}(0,s)$.
We apply Theorem 4.1 in \cite{BY} which not only gives the optimal stopping time for the
player which plays against nature but also
a sub--martingale property up to the optimal time. Once again taking Remark \ref{rem2.1} into account, for our setup
the sub--martingale property
becomes
a super--martingale property.
More precisely, Theorem 4.1 in \cite{BY} implies that for
the stopping time $\gamma^{*}:=T\wedge\inf\{t: X_t=V^{(I)}(t,B_t)\}$ we have the following property.
For any $P\in\mathcal P^{(I)}_s$,
the process
$V^{(I)}(t\wedge\gamma^{*},B_{t\wedge\gamma^{*}})$, $t\in [0,T]$ is a $P$--super--martingale
with respect to
the natural filtration $\mathcal F_t$, $t\geq 0$.

We apply the nondominated version of the optional decomposition theorem .
Since quadratic variation can be defined in a pathwise form then the condition
$B^{-1}\sqrt\frac{d\langle  B\rangle }{dt}\in I$ is invariant under equivalent change of measure. Hence the
set
$P^{(I)}_s$ is a saturated set (using \cite{Nu} terminology) of martingale measures. Namely, if
$P\in \mathcal P^{(I)}_s$ and $Q\sim P$ is a martingale measure on the canonical space then
$Q\in \mathcal P^{(I)}_s$. Thus, from Theorem 2.4 in \cite{Nu} it follows that there exists a process
$\Delta^{*}\in L(B,\mathcal P^{(I)}_s)$ such that for any probability measure $P\in\mathcal P^{(I)}_s$
\begin{equation}\label{6.1-}
P\left(V^{(I)}(0,s)+\int_{0}^{t}\Delta^{*}_u dB_u- V^{(I)}(t\wedge\gamma^{*},B_{t\wedge\gamma^{*}})\geq 0, \ \ \forall t\in [0,T]\right)=1.
\end{equation}
We claim that $(V^{(I)}(0,s),\Delta^{*},\gamma^{*})$ is a perfect hedge. Indeed,
let $\tau\in\mathcal T_T$ be a stopping time for the buyer and $P\in\mathcal P^{(I)}_s$.
First consider the event $\{\gamma^{*}<\tau\}$.
On this event we have (recall the definition of $\gamma^{*}$)
$V^{(I)}(\tau\wedge\gamma^{*},B_{\tau\wedge\gamma^{*}})=X_{\gamma^{*}}=H(\gamma^{*},\tau)$ and so from (\ref{6.1-})
\begin{equation*}
V^{(I)}(0,s)+\int_{0}^{\tau\wedge\gamma^{*}}\Delta^{*}_u dB_u\geq H(\gamma^{*},\tau) \ \ P-\mbox{a.s.}
\end{equation*}
Finally, we consider the event $\{\gamma^{*}\geq\tau\}$. Applying (\ref{6.1-}) and the trivial inequality
$V^{(I)}(t,x)\geq f(t,x)$ for all $t,x$
we obtain
\begin{equation*}
V^{(I)}(0,s)+\int_{0}^{\tau\wedge\gamma^{*}}\Delta^{*}_u dB_u\geq Y_{\tau}=H(\gamma^{*},\tau) \ \ P-\mbox{a.s.}
\end{equation*}
and the proof is completed.
\end{proof}
\begin{rem}
It seems that by applying Theorem 4.1 in \cite{BY} and the optional decomposition Theorem 2.4 in \cite{Nu}, Lemma 6.1 can be extended to path dependent options as long
as the regularity assumptions from \cite{BY} are satisfied. Since we are motivated by numerical applications, then for simplicity
we considered path--independent payoffs.
Still, a challenging open question, is whether Lemma \ref{lem6.1} can be
obtained under weaker (than Lipschitz or uniform type of continuity) regularity conditions.
\end{rem}

\subsection{Numerical Results}
In view of Lemma \ref{lem6.1} we use Theorem \ref{thm2.1} and provide a numerical analysis for
the super--hedging price of path--independent game options.
We assume that the interest rate in the market is a constant $r>0$, and so the stock price before discounting is
given by $S_t=e^{rt} B_t$, where, recall that $B$ is
the canonical process.
The payoffs before discounting are of the form
$\hat X_t=\hat g(S_t)$, $\hat Y_t=\hat f(S_t)$
where $\hat g\geq \hat f$. In order to compute the game option price we need to consider the discounted payoffs and so
during this section we put
$g(t,x):=e^{-rt}\hat g(e^{rt} x)$, $f(t,x):=e^{-rt}\hat f(e^{rt} x)$ and
$h\equiv 0$.

In \cite{E} (see Section 4),
the author proved that for game options (with finite or infinite maturity)
with continuous path--independent payoffs $\hat g,\hat f$
satisfying
\begin{equation}\label{6.1}
\frac{\hat g(x)}{x},\frac{\hat f(x)}{x} \  \mbox{are} \ \mbox{non} \  \mbox{increasing} \ \mbox{for} \ x>0
\end{equation}
the price is non decreasing in the volatility. Thus, (if the above assumption is satisfied) the price under volatility uncertainty
which is given by the interval $I=[\underline{\sigma},\overline{\sigma}]$ is the same as the price in the complete Black--Scholes market
with a constant volatility $\overline{\sigma}$. The later value can be approximated by the standard binomial models (see \cite{Ki1}).
In particular, this is the case for game put options given by
$$\hat g(x)=C(K-x)^{+}+\delta  \ \ \mbox{and} \ \  \hat f(x)=(K-x)^{+}, \ \ C\geq 1,\ K,\delta>0.$$
In Table 1, we test numerically the above statement from \cite{E} for game put options. This is done by comparing our numerical results with previous numerics
which was obtained in \cite{KKS} for
game put options in the Black--Scholes model.
\begin{table}
\begin{center}
\caption{
In this table we take the parameters
$r = 0.06$, $T = 0.5$, $K=100$, $\delta = 5$ and provide numerical results for game put options under model uncertainty given by the
interval $I=[0,0.4]$. We compare our results to previous numerical results (see \cite{KKS}) for game put options in the
Black--Scholes model with volatility $\sigma=0.4$.}
\begin{tabular}{lccccc}
\hline
\multicolumn{5}{c}{Values obtained with }\\
\cline{2-5}
$S_0$ & n = 200 & n = 400 & n = 700 & n = 1200 & Black--Scholes with $\sigma=\overline{\sigma}$ \\
\hline
80 & 20.7003 & 20.6719 & 20.6593 &  20.6532 & 20.6   \\
90 & 12.4932 & 12.4787 & 12.4938 & 12.4683 &  12.4 \\
100 & 5.00 & 5.00 & 5.00 & 5.00 & 5.00 \\
110 & 3.7609 & 3.7240 & 3.6862 & 3.6916 &  3.64 \\
120 & 2.6169 & 2.5897 & 2.5822 & 2.5729 &  2.54  \\
\hline
\end{tabular}
\end{center}
\end{table}

\subsubsection*{Game call options}
Next, we deal with game call options given by
$$\hat g(x)=C(x-K)^{+}+\delta  \ \ \mbox{and} \ \  \hat f(x)=(x-K)^{+}, \ \ C\geq 1,\ K,\delta>0.$$
We observe that in this case (\ref{6.1}) is not satisfied and so we expect that the price for
the model uncertainty interval $I=[\underline{\sigma},\overline{\sigma}]$ will be strictly bigger than the game call option price in the Black--Scholes model with volatility
$\overline{\sigma}$.
We take $C=1$, namely we consider game call options with constant penalty.

First, we compare (Table 2) the option prices
under model uncertainty with the prices in the Black--Scholes model (with the highest volatility).
Since we could not find previous numerical results for finite maturity game call options
in the Black--Scholes model, we compute it by applying the binomial trees from
\cite{Ki1}.
These trees are "almost" the same as our trees for the case where the volatility uncertainty interval $I$ contains only one point.
We observe that for call options
the prices in general should not coincide.
\begin{table}
\begin{center}
\caption{
We take the same parameters as in Table 1
and provide numerical results for game call options under model uncertainty given by the
interval $I=[0,0.4]$. We compare our results to binomial approximations for the Black--Scholes model with $\sigma=0.4$.}
\begin{tabular}{lcccc}
\hline
\multicolumn{5}{c}{Values obtained under model uncertainty }\\
\cline{2-5}
$S_0$ & n = 200 & n = 400 & n = 700 & n = 1200  \\
\hline
80 & 2.0805  & 2.0893 & 2.0847 & 2.0948 \\
85 & 2.8138 & 2.7964 & 2.8055 &  2.8018    \\
90 & 3.6553 & 3.5966 & 3.6241 & 3.6064 \\
95 & 4.5827 & 4.4682 & 4.5050 & 4.4874  \\
105 & 5.00 & 5.00 & 5.00 & 5.00  \\
110 & 10.00 & 10.00 & 10.00 & 10.00 \\
115 & 15.00 & 15.00 & 15.00 & 15.00   \\
120 & 20.00 & 20.00 & 20.00 & 20.00   \\
\hline
\multicolumn{5}{c}{Values obtained for Black--Scholes  }\\
\cline{2-5}
$S_0$ & n = 200 & n = 400 & n = 700 & n = 1200  \\
\hline
80 & 2.0625 & 2.0359 & 2.0244& 2.0210 \\
85 & 2.7706 & 2.7301 & 2.7274 &  2.7143    \\
90 & 3.5066 & 3.4889 & 3.4968 & 3.4798 \\
95 & 4.3497 & 4.3124 & 4.3056 & 4.2481  \\
105 & 5.00 & 5.00 & 5.00 & 5.00  \\
110 & 10.00 & 10.00 & 10.00 & 10.00 \\
115 & 14.9355 & 14.9304 & 14.9275 & 14.9260   \\
120 & 19.7812 & 19.7735 & 19.7691 & 19.7669   \\
\end{tabular}
\end{center}
\end{table}

Finally, we calculate numerically the stopping regions.
We observe that the discounted payoff
$f(t,B_t)=(B_t-Ke^{-rt})^{+}$, $t\geq 0$ is a
sub--martingale with respect to any probability measure
in the set $\mathcal P^{(I)}_s$. Thus, the buyer's optimal stopping time is just
$\tau\equiv T$.

For the seller, the optimal stopping time is (see Theorem 4.1 in \cite{BY})
$$\gamma^{*}=T\wedge\inf\{t: g(t,B_t)=V^{(I)}(t,B_t)\}.$$
Introduce the function
$$\tilde V(u,x):=\sup_{P\in \mathcal{P}^{(I)}_x}\sup_{\tau\in\mathcal T_{u}}\inf_{\gamma\in\mathcal T_{u}}
E_{P}\left[e^{-r(\tau\wedge\gamma)}\left((S_{\tau\wedge\gamma}-K)^{+}+\delta\mathbb{I}_{\gamma<\tau}\right)\right]
$$
where as before $S_t=e^{rt} B_t$, $t\geq 0$ is the stock price. The term $\tilde V(u,x)$
is the price of a game call option with maturity date $u$ and initial stock price $S_0=x$.
We observe that
$\gamma^{*}=T\wedge\inf\{t: S_t\in D\}$, where $D=D(T)$ is the stopping region (of course it depends on the maturity date $T$) given by
$$D=\{(t,x): \tilde V(T-t,x)=(x-K)^{+}+\delta\}.$$

In \cite{YYZ}, the authors studied the structure of the stopping region $D$ for game call options in the complete Black--Scholes market.
They proved (see Theorem 4.2) that the stopping region $D$ is of the form
$$D=\{(t,x): t\in [0,T_1], \ K\leq x\leq b(t)\}\bigcup \left\{[T_1,T_2]\times\{K\}\right\}$$
where $T_1<T_2<T$ and $b:[0,T_1]\rightarrow [K,\infty)$ can be computed numerically.

In Figure 1 we calculate numerically the stopping regions (for the seller)
for game call options both in the model uncertainty setup given by the interval $I=[0,0.4]$
and in the complete Black--Scholes model with volatility $\sigma=0.4$.
We obtain that the structure from \cite{YYZ} is valid for the model uncertainty case as well.
Furthermore, for both cases $T_2$ is the same, while $T_1$ and $b$ are different.
Up to date, there is no theoretical results related to the explicit structure of
stopping regions for game options under model uncertainty.

\begin{figure}
\centering
\includegraphics[width=0.9\textwidth]{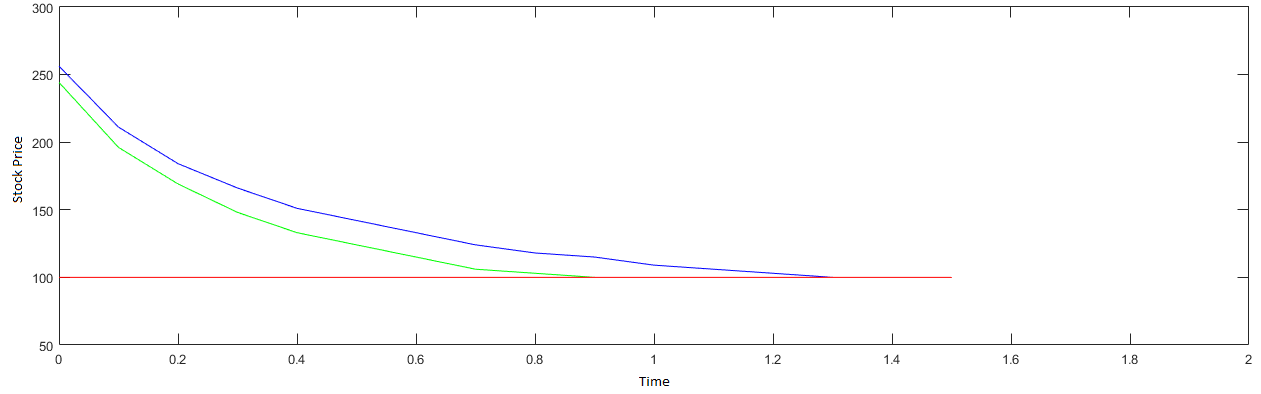}
\caption{We consider a game call option with maturity date $T=2$, a constant penalty $\delta=12$ and a strike price $K=100$.
As before the interest rate is $r=0.06$.
We take $n=1200$ and compute numerically the stopping regions for the seller. For the model uncertainty given by the interval $I=[0,0.4]$
we get that for $t\in [0,1.3]$ the seller should exercise at the first moment when the stock price is between the strike price and the
value given by the blue curve. For $t\in [1.3,1.5]$ the seller stops at the first moment the stock price equals to the strike price.
After the time $t=1.5$ the investor should not exercise (before the maturity date).
For the Black--Scholes model with volatility $\sigma=0.4$ we get that
for $t\in [0,0.9]$ the seller should exercise at the
first moment when the stock price is between the strike price and the
value given by the green curve. For $t\in [0.9,1.5]$ the seller stops at the first moment the stock price equals to the strike price.
After the time $t=1.5$ the investor should not exercise (before the maturity date).}
\end{figure}

\end{document}